\colorlet{genial}{black} 
\colorlet{genialsol}{black}
\newtheoremstyle{genialnumbox}
{7pt}
{7pt}
{\normalfont}
{}
{\small\bf\sffamily\color{genial}}
{\;}
{0.25em}
{%
{\small\sffamily\color{genial}\thmname{#1}}%
{\nobreakspace\thmnumber{\@ifnotempty{#1}{}\@upn{#2}}}
\thmnote{{\nobreakspace\the\thm@notefont\sffamily\bfseries\color{black}\nobreakspace(#3)}} 
}
\newtheoremstyle{blacknumex}
{7pt}
{7pt}
{\normalfont}
{} 
{\small\bf\sffamily}
{\;}
{0.25em}
{%
{\small\sffamily\color{genial}\thmname{#1}}%
{\nobreakspace\thmnumber{\@ifnotempty{#1}{}\@upn{#2}}}
\thmnote{{\nobreakspace\the\thm@notefont\sffamily\bfseries\color{black}\nobreakspace(#3)}} 
}
\newtheoremstyle{blacknumbox} 
{7pt}
{7pt}
{\normalfont}
{}
{\small\bf\sffamily}
{\;}
{0.25em}
{%
{\small\sffamily\color{genial}\thmname{#1}}%
{\nobreakspace\thmnumber{\@ifnotempty{#1}{}\@upn{#2}}}
\thmnote{{\nobreakspace\the\thm@notefont\sffamily\bfseries\color{black}\nobreakspace(#3)}} 
}
\newcommand\fH[1]{\sbox0{#1}\dimen0=\ht0 \advance\dimen0 -1ex
  \sbox2{\'{}}\sbox2{\raise\dimen0\box2}%
  {\ooalign{\hidewidth\kern.1em\copy2\kern-.5\wd2\box2\hidewidth\cr\box0\crcr}}}
\newtheoremstyle{genialnum}
{7pt}
{7pt}
{\normalfont}
{}
{\small\bf\sffamily\color{genial}}
{\;}
{0.25em}
{%
{\small\sffamily\color{genial}\thmname{#1}}%
{\nobreakspace\thmnumber{\@ifnotempty{#1}{}\@upn{#2}}}
\thmnote{{\nobreakspace\the\thm@notefont\sffamily\bfseries\color{black}\nobreakspace(#3)}} 
}
\newmdenv[skipabove=7pt,
skipbelow=7pt,
rightline=false,
leftline=false,
topline=false,
bottomline=false,
backgroundcolor=black!5,
linecolor=genial,
innerleftmargin=5pt,
innerrightmargin=5pt,
innertopmargin=10pt,
leftmargin=0cm,
rightmargin=0cm,
innerbottommargin=10pt]{tBox}
\newmdenv[skipabove=7pt,
skipbelow=7pt,
rightline=false,
leftline=false,
topline=false,
bottomline=false,
backgroundcolor=genial!10,
linecolor=genial,
innerleftmargin=5pt,
innerrightmargin=5pt,
innertopmargin=5pt,
innerbottommargin=5pt,
leftmargin=0cm,
rightmargin=0cm,
linewidth=4pt]{eBox}	
\newmdenv[skipabove=7pt,
skipbelow=7pt,
rightline=false,
leftline=true,
topline=false,
bottomline=false,
linecolor=genial!50,
innerleftmargin=5pt,
innerrightmargin=5pt,
innertopmargin=5pt,
leftmargin=0cm,
rightmargin=0cm,
linewidth=4pt,
innerbottommargin=5pt]{dBox}	
\newmdenv[skipabove=7pt,
skipbelow=7pt,
rightline=false,
leftline=false,
topline=false,
bottomline=false,
linecolor=gray,
backgroundcolor=black!5,
innerleftmargin=5pt,
innerrightmargin=5pt,
innertopmargin=5pt,
leftmargin=0cm,
rightmargin=0cm,
linewidth=4pt,
innerbottommargin=5pt]{cBox}
\newmdenv[skipabove=7pt,
skipbelow=7pt,
rightline=false,
leftline=false,
topline=false,
bottomline=false,
linecolor=gray,
backgroundcolor=black!5,
innerleftmargin=5pt,
innerrightmargin=5pt,
innertopmargin=5pt,
leftmargin=0cm,
rightmargin=0cm,
linewidth=4pt,
innerbottommargin=5pt]{pBox}
\newmdenv[skipabove=7pt,
skipbelow=7pt,
rightline=false,
leftline=false,
topline=false,
bottomline=false,
linecolor=genialsol,
innerleftmargin=5pt,
innerrightmargin=5pt,
innertopmargin=0pt,
leftmargin=0cm,
rightmargin=0cm,
linewidth=4pt,
innerbottommargin=0pt]{solBox}
\theoremstyle{genialnumbox}
\newtheorem{thm1}{Theorem}[section]
\newtheorem{ithm1}[thm1]{$\star$ THEOREM}
\newtheorem{ques1}[thm1]{Question}
\newtheorem{conj1}[thm1]{Conjecture}
\theoremstyle{blacknumex}
\newtheorem{exer}[thm1]{Exercise}
\newtheorem{exer*}[thm1]{$\ast$ Exercise}
\theoremstyle{blacknumbox}
\newtheorem{dfn1}[thm1]{Definition}
\theoremstyle{genialnum}
\newtheorem{cor1}[thm1]{Corollary}
\newtheorem{prop1}[thm1]{Proposition}
\newtheorem{lem1}[thm1]{Lemma}
\newtheorem{exm1}[thm1]{Example}
\newenvironment{thm}{\paragraph{ } \begin{tBox}\begin{thm1}}{\end{thm1}\end{tBox}}
\newenvironment{exe*}{\paragraph{ } \begin{eBox}\begin{exer*}}{\hfill{\color{genial}
\ensuremath{\diamond\diamond\diamond}}\end{exer*}\end{eBox}}
\newenvironment{dfn}{\paragraph{ } \begin{dBox}\begin{dfn1}}{\end{dfn1}\end{dBox}}	
\newenvironment{cor}{\paragraph{ } \begin{cBox}\begin{cor1}}{\end{cor1}\end{cBox}}	
\newenvironment{ques}{\paragraph{ } \begin{cBox}\begin{ques1}}{\end{ques1}\end{cBox}}
\newenvironment{prop}{\paragraph{ } \begin{pBox}\begin{prop1}}{\end{prop1}\end{pBox}}	
\newenvironment{lem}{\paragraph{ } \begin{pBox}\begin{lem1}}{\end{lem1}\end{pBox}}
\newenvironment{lem*}[1]{\vspace{1ex}\noindent
{\bf Lemma* (#1).} [restatement]  \hspace{0.5em} \em }{ }
\newenvironment{thm*}[1]{\begin{cBox}
\vspace{1ex}\noindent 
{\bf Theorem* (#1).} [restatement]  \hspace{0.5em} }{\end{cBox}}
\theoremstyle{genialnum}
\newtheorem*{clm*}{Claim}
\newenvironment{sol}%
{\begin{solBox}
\par \noindent 
\scriptsize
{\bf Solution to ex:{\color{blue} \arabic{exer}}.}  {\color{red} \ \  :( } \\ }%
{\hfill {\color{blue} :) $\checkmark$} \end{solBox}}
\newcommand{\ENDEXER}{
{\expandafter\comment}
{\expandafter\endcomment}
}
\newtheorem{remark}[thm1]{Remark}
\renewcommand{\@seccntformat}[1]{\llap{\textcolor{genial}{\csname the#1\endcsname}\hspace{1em}}}                    
\renewcommand{\section}{\@startsection{section}{1}{\z@}
{-4ex \@plus -1ex \@minus -.4ex}
{1ex \@plus.2ex }
{\normalfont\large\sffamily\bfseries}}
\renewcommand{\subsection}{\@startsection {subsection}{2}{\z@}
{-3ex \@plus -0.1ex \@minus -.4ex}
{0.5ex \@plus.2ex }
{\normalfont\sffamily\bfseries}}
\renewcommand{\subsubsection}{\@startsection {subsubsection}{3}{\z@}
{-2ex \@plus -0.1ex \@minus -.2ex}
{.2ex \@plus.2ex }
{\normalfont\small\sffamily\bfseries}}                        
\renewcommand\paragraph{\@startsection{paragraph}{4}{\z@}
{-2ex \@plus-.2ex \@minus .2ex}
{.1ex}
{\normalfont\small\sffamily\bfseries}}
\newcommand{\nicetitle}[2]{\title[#2]{\sffamily\large #1} }
\newcommand{\niceauthor}[1]{\author{\sffamily #1} }
\newcommand{\Integer}{\mathbb{Z}}
\newcommand{\Z}{\Integer}
\newcommand{\N}{\mathbb{N}}
\newcommand{\eps}{\varepsilon}
\newcommand{\ie}{{\em i.e.\ }}
\newcommand{\eg}{{\em e.g.\ }}
\newcommand{\IT}{\mathrm{IT}}
\def\squareforqed{\hbox{\rlap{$\sqcap$}$\sqcup$}}
\def\qed{\ifmmode\squareforqed\else{\unskip\nobreak\hfil
\penalty50\hskip1em\null\nobreak\hfil\squareforqed
\parfillskip=0pt\finalhyphendemerits=0\endgraf}\fi}
\newcommand{\ignore}[1]{ }
\newcommand{\vphi}{\varphi}
\newcommand{\AND}{\qquad \textrm{and} \qquad}
\newcommand{\define}[1]{\textbf{#1}}
\newcommand{\USS}{\mathsf{USS}}
\newcommand{\CS}{\mathsf{CS}}
\address{GS: School of Mathematical Sciences\\ Holon Institute of Technology \\ Holon \\ Israel}
\email{guysal@hit.ac.il}
\address{YS: Department of Mathematics\\The University of Utah\\Salt Lake City \\ Utah \\USA}
\email{svoray@math.utah.edu}
\address{AY: Department of Mathematics\\Ben-Gurion University of the Negev\\Beer-Sheva \\Israel}
\email{yadina@bgu.ac.il}
\thanks{This research partially supported by 
the Israel Science Foundation, grant no. 954/21.  YS thanks Ben-Gurion University of the Negev for hospitality while working on this project.}
\begin{document}

\maketitle

\begin{abstract}
    We study completely syndetic ($\CS$) sets in discrete  groups --- subsets that for every $n \in \mathbb N$ admit finitely many left translates that jointly cover every $n$-tuple of group elements. While for finitely-generated groups, the non-virtually nilpotent ones admit a partition into two $\CS$ sets, we show that virtually abelian groups do not. We also characterize $\CS$ subsets of $\mathbb{Z}$, and as a result characterize subsets of $\mathbb{Z}$ whose closure in $\beta \mathbb{Z}$ contains the smallest two sided ideal of $\beta \mathbb{Z}$. Finally, we show that $\CS$ sets can have an arbitrarily small density.
\end{abstract}

\section{Introduction}
Let $G$ be a discrete countable group. A subset $A\subset G$ is called (left) {\em syndetic} if there exists finitely many left translates of $A$ whose union covers the entire group --- that is, every $g\in G$ lies in at least one of them. This notion generalizes the idea of having a finite index from subgroups to arbitrary subsets. 

Syndetic sets have appeared in various mathematical contexts over the past century \cite{BalcarFranek97,Fur81, GotHel55, Patil19}, particularly in connection with subsets of the integers. In $\mathbb{Z}$, the syndetic subsets are exactly those with bounded gaps (when viewed as increasing sequences).

It is a fact that every discrete group can be partitioned into two disjoint syndetic sets; see Remark \ref{rem:every_group_is_syndetic_decomposable}. For instance, the even and odd integers form such a pair in $\mathbb{Z}$.

Recently, Kennedy, Raum, and Salomon \cite{KenRaumSal22} introduced a stronger notion of syndeticity in their study of the universal minimal proximal and strongly proximal flows of discrete groups. A subset $A \subset G$ is called (left) completely syndetic (abbreviated $\CS$) if for every $n \in \mathbb{N}$, there exist finitely many left translates of $A$ such that every $n$-elements of $G$ all lie together in at least one of these translates; see Definition \ref{dfn:CS} below.

$\CS$ sets are ``very large'' sets in a certain sense.  It is simple to verify that if $A \subset B$ and $A$ is $\CS$,
then so is $B$. It stands to reason that ``large'' sets cannot fit together in a ``small'' group without overlapping.  
In other words, a natural question is: {\em Which groups can be partitioned into two $\CS$ sets?}
In \cite{KenRaumSal22}, it is shown that groups which {\em cannot} be decomposed this way must be strongly amenable --- a property equivalent, in the finitely generated case, to being virtually nilpotent. 
Thus, ``large'' groups admit disjoint $\CS$ sets.
One of the main motivations for this work was to investigate what happens in the virtually nilpotent case (``small'' groups). While we do not provide a complete answer, we do obtain several partial results and explore related phenomena.

One such result is the following.

\begin{thm}[Theorem \ref{thm:virt_abelian_not_CS-dcmpsbl}]\label{thm:intro_virt_abelian_not_CS-dcmpsbl}
Virtually abelian groups cannot be partitioned into two $\CS$ sets.
\end{thm}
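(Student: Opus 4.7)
The plan is to assume for contradiction that an infinite, finitely generated virtually abelian group $G$ admits a partition $G = A \sqcup B$ into two $\CS$ sets; the case of finite $G$ is trivial since the only $\CS$ subset of a finite group is the whole group itself (apply the definition at $n = |G|$). I would reduce to the case $G = \mathbb{Z}^d$ and then derive a contradiction using the Stone-\v{C}ech compactification.

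The reduction relies on the following \emph{restriction lemma}: if $C \subseteq G$ is $\CS$ and $H \leq G$ has finite index $m$, then $C \cap H$ is $\CS$ in $H$. The key observation is to apply the $\CS$-property of $C$ at level $nm$ rather than $n$. Fixing left coset representatives $c_1, \ldots, c_m$ of $H$ in $G$, for an $n$-subset $S \subseteq H$ form the $nm$-subset $\tilde{S} = \{c_j s : 1 \leq j \leq m,\ s \in S\} \subseteq G$. A $\CS$-witness $\{g_1, \ldots, g_k\}$ at level $nm$ gives some $i_0$ with $\tilde{S} \subseteq g_{i_0} C$; writing $g_{i_0} = c_l h$ with $h \in H$, the elements $g_{i_0}^{-1}(c_l s) = h^{-1} s$ lie in $C \cap H$ for every $s \in S$, so $h^{-1} S \subseteq C \cap H$. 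The collection $\{h(i) : g_i = c_{l(i)} h(i),\ 1 \leq i \leq k\} \subseteq H$ is the desired $H$-witness at level $n$.

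Applying this lemma first to a finite-index abelian subgroup $H \cong \mathbb{Z}^d \oplus T$ of $G$ (existing by virtual abelianity, with $T$ finite torsion and $d \geq 1$ since $G$ is infinite), and then to $\mathbb{Z}^d \oplus 0 \leq \mathbb{Z}^d \oplus T$, the partition descends to a $\CS$-partition $\mathbb{Z}^d = A' \sqcup B'$. To rule out such a partition I would invoke the Stone-\v{C}ech characterization of $\CS$ subsets of $\mathbb{Z}^d$: namely, $C \subseteq \mathbb{Z}^d$ is $\CS$ if and only if its closure $\overline{C}$ in $\beta\mathbb{Z}^d$ contains the smallest two-sided ideal $K(\beta\mathbb{Z}^d)$. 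Since $\mathbb{Z}^d = A' \sqcup B'$ makes $\overline{A'}$ and $\overline{B'}$ into complementary clopen subsets of $\beta\mathbb{Z}^d$, both of them containing the nonempty set $K(\beta\mathbb{Z}^d)$ is impossible.

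The main obstacle is the $d$-dimensional Stone-\v{C}ech characterization; the restriction lemma is a robust bookkeeping argument once one spots the level-$nm$ trick, and the reduction is then routine. The nontrivial direction of the characterization --- that $\CS$-ness of $C$ forces $\overline{C} \supseteq K(\beta\mathbb{Z}^d)$ --- is expected to proceed via tensor-product ultrafilters: for $p \in K(\beta\mathbb{Z}^d)$ the iterated product $p^{\otimes n}$ sits off-diagonal, so a $\CS$-witness yields $g_i C \in p$ for some $i$, and the further deduction that $C \in p$ itself then exploits the group structure carried by minimal left ideals in the commutative semigroup $\beta\mathbb{Z}^d$. This characterization extends the $\mathbb{Z}$-case singled out in the abstract and should follow by the same methods.
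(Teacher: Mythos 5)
Your restriction lemma is correct and is a nice direct observation: applying the $\CS$-property of $C$ at level $nm$ to the set $\tilde S = \{c_j s\}$ and decomposing the witness $g_{i_0} = c_\ell h$ does show that $C\cap H$ is $\CS$ in $H$ whenever $[G:H]=m<\infty$. In fact this gives a more elementary route to one half of the paper's Proposition~\ref{prop:finite index} than the argument given there via $\IT(r(x))$. (A minor issue: you restrict to finitely generated $G$ to obtain $H\cong\mathbb Z^d\oplus T$, but the theorem has no finite-generation hypothesis; the reduction to $\mathbb Z^d$ is not available for, say, $G=\mathbb Q$, whereas the paper's quasi-center argument covers that case.)

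The real gap is in the step you describe as ``invoking the Stone--\v Cech characterization of $\CS$ subsets of $\mathbb Z^d$'': the equivalence ``$C$ is $\CS$ iff $\overline C\supseteq K(\beta\mathbb Z^d)$'' is not an available tool here --- it is Corollary~\ref{cor:ol_A_contains_K(beta Z)} of the paper, and its proof \emph{invokes Theorem~\ref{thm:virt_abelian_not_CS-dcmpsbl}}. In effect, ``$\CS\Rightarrow\overline C\supseteq K(\beta G)$'' is equivalent (via Proposition~\ref{prop:CS and right ideals} and the fact that distinct minimal closed right ideals are disjoint) to the assertion that $\beta G$ has a \emph{unique} minimal closed right ideal, which is precisely the content of the theorem you are proving. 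So the proposal is circular unless you supply an independent proof of that uniqueness for $\mathbb Z^d$. Your tensor-product sketch does not do this: the observation that for $p\in K(\beta\mathbb Z^d)$ some translate $f+C$ lies in $p$ already follows from $1$-syndeticity and for any ultrafilter; the higher levels $n$ add nothing in the argument as written, and the passage from ``$f+C\in p$ for some $f$'' to ``$C\in p$'' is exactly the missing uniqueness fact, which ``the group structure of minimal left ideals'' does not hand you.

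What actually makes the abelian case work --- and is the heart of the paper's proof in disguise --- is that $G$ sits in the \emph{center} of the right topological semigroup $\beta G$ when $G$ is abelian: the continuous maps $x\mapsto gx$ and $x\mapsto xg$ agree on the dense set $G$, hence everywhere, so $gx=xg$. Consequently $r(gx)=\overline{gx\,\beta G}=\overline{xg\,\beta G}=\overline{x\,\beta G}=r(x)$ for all $g\in G$, i.e.\ $\IT(r(x))=G$ for every $x$, and Theorem~\ref{thm:IT r(x)} (or a short direct argument) yields uniqueness of the minimal closed right ideal. The paper's proof is the natural generalisation of exactly this: one replaces ``$g$ is central'' by ``$g$ has finite conjugacy class'' and shows the quasi-center $Q$ satisfies $Q\le\IT(r(x))$, then uses $[G:Q]<\infty$ for virtually abelian $G$. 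You should make this centrality/finiteness-of-conjugacy-class step explicit; without it, the proposal reduces the theorem to an unproven (and in this paper, downstream) claim about $\beta\mathbb Z^d$.
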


Unfortunately, the case of virtually nilpotent groups that are not virtually abelian remains open.
\begin{ques}[Question \ref{ques:nilpotent}]
Is there a virtually nilpotent group, necessarily not virtually abelian, that can be partitioned into two $\CS$ sets?
\end{ques}

In fact, the {\em virtually} aspect is not a genuine obstacle: a group can be partitioned into two $\CS$ sets if and only if some finite-index subgroup can. Therefore, in the finitely generated case, the question reduces to studying nilpotent groups; see Proposition \ref{prop:finite index}.

Unlike syndetic sets in $\mathbb{Z}$, which are easy to describe, the structure of $\CS$ subsets of $\mathbb{Z}$ is more subtle. One of our aims is to give an explicit characterization of such sets.

To this end, we introduce the notion of a uniform sub-Szemer\'{e}di set. A subset $A \subset \mathbb{Z}$ is called {\em uniform sub-Szemer\'{e}di} if it contains arbitrarily long sub-arithmetic progressions of a uniformly bounded difference;
see Definition \ref{dfn:USS} below. With this terminology, we obtain the following characterization:

\begin{thm}[Theorem \ref{thm:CSinZ}]
$A \subset \Z$ is $\CS$ if and only if $\Z \setminus A$ is not uniform sub-Szemer\'{e}di.
\end{thm}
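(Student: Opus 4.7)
The plan is to prove both implications by contrapositive arguments.

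I first prove that if $B$ is USS, then $A$ is not $\CS$. Let $d$ be the uniform difference bound from the USS definition. Given any finite $F \subset \Z$, by USS there is a sub-arithmetic progression (or bounded-gap sub-sequence) $P \subset B$ of length $N \gg \mathrm{diam}(F)$ with common difference $d' \leq d$, contained in an interval $I$. For each $t \in F$, the translate $t + B$ contains $P + t \subset I + t$. The intersection $I^\ast := \bigcap_{t \in F}(I + t)$ is a long interval for $N$ large, and any $d$ consecutive integers $S \subset I^\ast$ satisfy $S \cap (t + B) \neq \emptyset$ for every $t \in F$, since $|S| = d \geq d'$ and $S$ lies in the span of $P + t$ for each $t$. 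Hence $A$ fails to be $\CS$ at level $n_0 = d$.

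I now prove the converse by contrapositive: if $A$ is not $\CS$, then $B$ is USS. Fix $n_0$ for which $\CS$ fails: for every finite $F \subset \Z$, there is an $n_0$-set $S_F$ with $S_F \cap (t + B) \neq \emptyset$ for every $t \in F$. Setting $F = F_M := [-M, M]$ for $M \to \infty$ and pigeonholing the assignment $t \mapsto$ some $s \in S_F$ with $s - t \in B$, I obtain $s_M \in S_F$ and $T_M \subset F_M$ with $|T_M| \geq |F_M|/n_0$ and $s_M - T_M \subset B$. Thus $B$ contains shifted copies of the density-$1/n_0$ sets $T_M$ on arbitrarily long intervals. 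It remains to upgrade this density-type information to a USS structure in $B$ --- a sub-AP or bounded-gap piece of \emph{uniform} difference for arbitrarily long lengths.

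The main obstacle is precisely this uniformity: a naive pigeonhole on the consecutive gaps of $T_M$ only yields bounded-gap sub-pieces whose gap bound grows with the desired length (as illustrated by worst-case $T_M$ consisting of equally spaced pairs). To stabilize the bound, I plan to combine a second-level pigeonhole over $M$ (passing to a subsequence on which a common $d^\ast = d^\ast(n_0)$ achieves sub-pieces of growing length) with a more structured choice of $F$ beyond intervals --- for example, $F$ being an arithmetic progression or a union of cosets modulo some $d$, forcing more arithmetic regularity on $T_M$. A van der Waerden- or Szemer\'{e}di-type argument can then convert bounded-gap structure in $B$ into an actual AP of bounded common difference, as needed. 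An alternative route is via the ultrafilter characterization mentioned in the abstract: $A$ is $\CS$ iff $\overline{A} \supset K(\beta \Z)$, equivalent to $B$ missing $K(\beta \Z)$, i.e.\ $B$ not being piecewise syndetic; one then verifies USS is equivalent to piecewise syndeticity in $\Z$.
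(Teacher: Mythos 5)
Your forward direction (USS $\Rightarrow$ not $\CS$) is correct: if $P\subset B$ is a sub-arithmetic progression of difference $\leq d$ spanning an interval $I$ long enough that $I^\ast=\bigcap_{t\in F}(I+t)$ has length $\geq d$, then for any block $S$ of $d$ consecutive integers in $I^\ast$ and any $t\in F$, the set $S-t$ is an interval of length $d$ inside the span of $P$, so it must meet $P$; hence $S\cap(t+B)\neq\emptyset$ for all $t\in F$ and $A$ is not $d$-syndetic.

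The converse, however, has a genuine gap which you correctly identify but which cannot be bridged the way you propose. The information you extract by pigeonholing --- that $B$ contains shifts of sets $T_M\subset[-M,M]$ with $|T_M|\geq(2M+1)/n_0$ --- is strictly weaker than USS. A counting argument shows the obstruction precisely: if a subset of $[0,2M]$ has $\geq 2M/n_0$ elements but contains no sub-arithmetic progression of difference $\leq D$ and length $\geq L$, then at least about $2M/(n_0 L)$ of its consecutive gaps exceed $D$, forcing $D\cdot 2M/(n_0 L)\leq 2M$, \ie $D\leq n_0 L$. So positive density on long intervals only produces sub-progressions whose \emph{difference grows linearly in the desired length}; no ``second-level pigeonhole over $M$'' can manufacture a single $d^\ast$ working for all $L$, and van der Waerden or Szemer\'{e}di applied to the $T_M$ again gives progressions whose difference is uncontrolled as $L\to\infty$. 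The pigeonhole on translates discards exactly the uniformity that USS requires, and density alone does not recover it.

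Your alternative route via $\beta\Z$ is a legitimate strategy but is not a shortcut. The claim ``$A$ is $\CS$ iff $\overline{A}\supset K(\beta\Z)$'' is \emph{not} a formal consequence of the $\CS$--right-ideal correspondence by itself: one needs to know that $\beta\Z$ has a \emph{unique} minimal closed right ideal, which is the content of Theorem~\ref{thm:virt_abelian_not_CS-dcmpsbl} and is itself nontrivial. One must then further invoke the Hindman--Strauss characterization of piecewise syndetic sets as those meeting $K(\beta\Z)$, and separately verify that piecewise syndeticity in $\Z$ coincides with USS. Indeed the paper derives Corollary~\ref{cor:ol_A_contains_K(beta Z)} \emph{from} Theorem~\ref{thm:CSinZ} together with Theorem~\ref{thm:virt_abelian_not_CS-dcmpsbl}, not in the other direction.

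The paper's actual proof is a direct combinatorial induction that avoids the density obstacle entirely. It encodes $A^+=A\cap\N$ by its block-length sequence $(\alpha_n,\beta_n)$ via Proposition~\ref{prop:correspondence}, reformulates ``$\N\setminus A^+$ not USS'' as ``$\sup_n\beta_n<\infty$ and for every $D$ there exists $L(D)$ so that every window of $L(D)$ consecutive indices contains some $\alpha$-block of length $\geq D$'' (Lemma~\ref{lem:CompNotUSS}), and then proves $n$-syndeticity by induction on $n$: given an $(n+1)$-set $S$ with largest element $u$, one translates $S\setminus\{u\}$ into $A$ using the $n$-syndeticity, and if the translated $u$ lands in a gap of $A^+$, the hypothesis guarantees a nearby $\alpha$-block of length $\geq 2M_n+1$ into which the entire set can be re-translated with a translation bounded in terms of $n$ alone. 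This keeps the control local and uniform by construction, which is exactly what a density argument fails to do.
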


Any $\CS$ set is in particular thick. A set $A \subset G$
is (left) {\em thick} if for any finite set $F \subset G$ there exists some $g \in  G$ such that $F \subset gA$. One feature of $\CS$ sets is that they are ``so large'' that they are simultaneously syndetic and thick. It is therefore natural to ask whether the converse is also true. Using the above characterization of $\CS$ sets of the integers, we show this is false.

\begin{cor}[Corollary \ref{cor:syndethick_yet_non_CS}]
    There are thick and syndetic sets of integers that are not $\CS$.
\end{cor}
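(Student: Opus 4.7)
The plan is to invoke Theorem \ref{thm:CSinZ}, which characterizes $\CS$ subsets of $\Z$ as those whose complement is not uniform sub-Szemer\'edi. Given this, the corollary reduces to exhibiting a single $A \subset \Z$ that is simultaneously thick, syndetic, and has $\Z\setminus A$ uniform sub-Szemer\'edi.

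The construction I have in mind is as follows. I would partition $\Z$ into a doubly-infinite sequence of alternating finite intervals of consecutive integers
$$\ldots, I_{-1}, J_{-1}, I_0, J_0, I_1, J_1, \ldots$$
with $|I_n|, |J_n| \to \infty$ as $|n| \to \infty$, and then set
$$A \;=\; \bigcup_{n\in\Z} I_n \;\cup\; \bigcup_{n\in\Z} \bigl(J_n \cap (2\Z+1)\bigr);$$
that is, $A$ contains each $I_n$ entirely, while on each $J_n$ it picks up only the odd integers.

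The three verifications will be routine. First, $A$ is thick because it contains the intervals $I_n$, which are arbitrarily long blocks of consecutive integers. Second, $A$ is syndetic: consecutive elements of $A$ always differ by at most $2$, since inside each $I_n$ there are no gaps, and inside each $J_n$ the elements of $A$ are exactly the odd integers. Third, $\Z \setminus A$ contains $J_n \cap 2\Z$, which is an arithmetic progression of common difference exactly $2$ and length $\approx |J_n|/2 \to \infty$; hence $\Z \setminus A$ contains arbitrarily long arithmetic progressions of a single fixed common difference and is therefore uniform sub-Szemer\'edi. Applying Theorem \ref{thm:CSinZ} then gives that $A$ is not $\CS$, as desired.

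I do not expect any serious obstacle here, since all the deep content sits in the characterization Theorem \ref{thm:CSinZ}. The one point that requires any care is to ensure that the complement witnesses the uniform sub-Szemer\'edi property with a \emph{single} bounded common difference shared across all lengths --- which is precisely why I use the same residue class $2\Z$ inside every $J_n$, rather than letting the progression parameters vary with $n$.
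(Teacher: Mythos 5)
Your proposal is correct and follows essentially the same strategy as the paper: construct an $A$ whose complement lives inside $2\Z$ (ensuring gaps of at most $2$, hence syndeticity), has arbitrarily long gaps (ensuring thickness), yet contains arbitrarily long arithmetic progressions of difference $2$ (so the complement is $2$-$\USS$, and Theorem \ref{thm:CSinZ} gives non-$\CS$). The paper instantiates this with the concrete set $B = \{2^n + 2k : n \in \N,\ 0 \le k \le n-1\}$ and $A = \Z \setminus (B \cup (-B))$, whereas you keep the alternating-interval construction generic with $|I_n|, |J_n| \to \infty$; the underlying mechanism is identical, and your verification of the three properties is sound (in particular, the boundary crossings between $I_n$ and $J_n$ still keep gaps $\le 2$, since the complement of $A$ sits entirely inside the even integers).
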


The notion of complete syndeticity therefore expresses a very strong sense of ``largeness'' (at least stronger than that of thickness and syndeticity). 
Another common measure of the largeness of a set is its density. 
We show that these two notions of largeness --- complete syndeticity and large density --- are unrelated.

\begin{thm}[Theorem \ref{thm:small CS in any group}]
\label{thm:intro_small CS in any group}
Let $G$ be an infinite finitely generated group.
For any $\eps>0$ there exists a $\CS$ set $A \subset G$ with a density less than $\eps>0$.
\end{thm}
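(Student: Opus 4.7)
My plan is to construct $A \subset G$ as a disjoint union $A = \bigsqcup_{n \ge 1} A_n$, where each $A_n$ is a sparse set designed so that $A_n$ by itself already satisfies the $\CS$ condition at level $n$: there exists a finite set $F_n \subset G$ such that for every $n$-subset $S \subset G$, some $h \in F_n$ satisfies $hS \subset A_n$. Because being $\CS$ at level $n$ passes to supersets, the union $A \supset A_n$ is $\CS$ at level $n$ with the same witness $F_n$, and doing this for every $n$ makes $A$ completely syndetic; meanwhile, if I arrange $\operatorname{density}(A_n) < \eps/2^n$, then $\operatorname{density}(A) < \eps$ by subadditivity. The problem thus reduces to producing, for each $n$ and each tolerance $\delta > 0$, such a level-$n$ witness of density at most $\delta$.

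For the model case $G = \mathbb{Z}$, I would take $n$ distinct primes $p_1, \ldots, p_n$ so large that $\sum_i 1/p_i < \delta$, and set $A_n := p_1 \mathbb{Z} \cup \cdots \cup p_n \mathbb{Z}$. Given any $n$-subset $\{s_1, \ldots, s_n\}$, the Chinese Remainder Theorem supplies an integer $h$ (unique modulo $p_1 \cdots p_n$) with $h \equiv -s_i \pmod{p_i}$, so that $h + s_i \in p_i \mathbb{Z} \subset A_n$ for every $i$; choosing $F_n := \{0, 1, \ldots, p_1 \cdots p_n - 1\}$ gives the required finite witness set. This construction lifts cleanly to any infinite fg $G$ equipped, for each $n$, with $n$ pairwise coprime finite-index normal subgroups $N_1, \ldots, N_n$ (i.e.\ $N_i N_j = G$ for $i \ne j$) of sufficiently large indices: the group-theoretic CRT — the surjectivity of $G \to \prod_i G/N_i$ — supplies a suitable $h$ in a transversal of $\bigcap_i N_i$, and $A_n := N_1 \cup \cdots \cup N_n$ is level-$n$ CS of density at most $\sum_i 1/[G : N_i]$.

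The principal obstacle is that a general infinite fg group need not carry enough (or any) proper finite-index normal subgroups — Tarski monsters and other infinite simple groups being the extreme case — so the CRT-based construction is not universally available. For such $G$ I expect the level-$n$ witness $A_n$ to come from a more intrinsic combinatorial argument exploiting the infinitude of $G$ directly: enumerating $G$ along its Cayley graph and placing carefully chosen ``universal'' finite patterns sparsely throughout, using the growth of balls to keep the density of $A_n$ below $\delta$ while engineering $F_n$ to stay finite by reusing the same shifts across many $n$-subsets. Finding a combinatorial substitute for the CRT that simultaneously certifies the level-$n$ covering property and controls density in the absence of cosets is the delicate core of the argument.
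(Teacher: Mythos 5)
Your construction for $G=\mathbb{Z}$ is a genuinely different and quite elegant alternative to the paper's approach. The paper constructs a low-density $\CS$ set in $\mathbb{Z}$ by explicitly designing block lengths so that the complement is not uniform sub-Szemer\'edi and then invoking Theorem~\ref{thm:CSinZ}; your CRT-based witness $A_n = p_1\mathbb{Z} \cup \cdots \cup p_n\mathbb{Z}$ with $\sum_i 1/p_i < \eps/2^n$ is self-contained, bypasses the $\USS$ machinery entirely, and correctly produces an $n$-syndetic set with a finite witness set of size $p_1\cdots p_n$. Two small points to tighten: the sets $A_n$ are not disjoint (each contains $0$), so write $A=\bigcup_n A_n$ rather than $\bigsqcup_n A_n$; and upper density (a $\limsup$) is not countably subadditive in general, so ``$d(A) \le \sum_n d(A_n)$'' needs justification --- it does hold here because the $A_n$ are finite unions of arithmetic progressions and the number of relevant moduli $\le 2r$ grows slower than $r$, but you should say so.

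The real gap is in the general group case, and here you have the structure of the problem inverted. You identify groups without enough finite-index normal subgroups (infinite simple groups, Tarski monsters) as the hard case requiring ``a combinatorial substitute for the CRT,'' but in fact that regime is the \emph{easy} case. Such groups are never virtually nilpotent, and for finitely generated non-virtually-nilpotent $G$ the result already follows from \cite{KenRaumSal22} and \cite{FriTamVa18}: there exist infinitely many pairwise disjoint $\CS$ subsets of $G$, and since they are disjoint, for every $\eps > 0$ at most finitely many of them can have density $\ge \eps$. No construction is needed at all. The remaining case is $G$ virtually nilpotent; by a finite-index reduction (Proposition~\ref{prop:finite index} in spirit, and worked out in the proof of Theorem~\ref{thm:small CS in any group}) one may assume $G$ nilpotent. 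A finitely generated infinite nilpotent group admits a surjection $\varphi : G \to \mathbb{Z}$, and the paper transports the $\mathbb{Z}$-construction to $G$ via Lemma~\ref{lem:CS from quotient} ($\varphi^{-1}$ of a $\CS$ set is $\CS$) and Lemma~\ref{lem:density} (density control under $\varphi^{-1}$, using polynomial growth). Your CRT construction fits this reduction too: the preimages $\varphi^{-1}(p_i\mathbb{Z})$ are pairwise coprime finite-index normal subgroups of $G$, so your level-$n$ sets lift as $\varphi^{-1}(A_n)$ --- but you would still need the polynomial-growth density estimate of Lemma~\ref{lem:density} to control $d_{G}(\varphi^{-1}(A_n))$; the claim ``density at most $\sum_i 1/[G:N_i]$'' is not automatic for a general word metric. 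In short: your $\mathbb{Z}$ argument is a valid alternative core, but the reduction of the general problem to $\mathbb{Z}$ is the content you are missing, and the case you flag as the obstruction is in fact the one that comes for free.
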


Let $\beta G$ denote the Stone--\v{C}ech compactification of a countable discrete group $G$. This compactification carries a natural semigroup structure, turning it into a right topological semigroup (see the discussion preceding Proposition~\ref{prop:CS and right ideals}). A subset $A \subset G$ is $\CS$ if and only if its closure $\overline{A} \subset \beta G$ contains a right ideal of $\beta G$ (see Proposition~\ref{prop:CS and right ideals}). In this sense, $\CS$ sets arise naturally in the context of the Stone--\v{C}ech compactification and are well worth studying from this perspective.

Many of the results in this paper can be naturally formulated in this language. For instance, Theorem~\ref{thm:intro_virt_abelian_not_CS-dcmpsbl} translates to the statement that if $G$ is virtually abelian, then $\beta G$ contains a unique minimal closed right ideal, which then must necessarily be equal to $K(\beta G)$, the unique smallest two-sided ideal of $\beta G$. Combined with Theorem~\ref{thm:intro_small CS in any group}, this leads to the following corollary:

\begin{cor}[Corollary~\ref{cor:ol_A_contains_K(beta Z)}]
Let $A \subset \Z$. Then $\overline{A} \supset K(\beta \Z)$ if and only if $\Z \setminus A$ is not uniform sub-Szemer\'{e}di.
\end{cor}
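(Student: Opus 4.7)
The plan is to obtain the corollary by concatenating three facts already established (or stated) earlier in the paper: Theorem \ref{thm:CSinZ}, characterizing $\CS$ subsets of $\Z$ as those whose complement fails to be uniform sub-Szemer\'edi; Proposition \ref{prop:CS and right ideals}, identifying $\CS$ sets with those $A$ whose closure $\overline{A}\subset\beta\Z$ contains a right ideal of $\beta\Z$; and the semigroup reformulation of Theorem \ref{thm:intro_virt_abelian_not_CS-dcmpsbl} (discussed in the paragraph preceding the corollary), which asserts that $\beta\Z$ has a unique minimal closed right ideal, and that this ideal is $K(\beta\Z)$.

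For the ($\Leftarrow$) direction, I would assume $\Z\setminus A$ is not uniform sub-Szemer\'edi. Theorem \ref{thm:CSinZ} then says $A$ is $\CS$, so by Proposition \ref{prop:CS and right ideals} there is a right ideal $I\subset\beta\Z$ contained in $\overline{A}$. Since right multiplication in $\beta\Z$ is continuous, $\overline{I}$ is a closed right ideal still contained in $\overline{A}$. A standard Zorn's lemma argument inside the compact right topological semigroup $\beta\Z$ produces a minimal closed right ideal $J\subset\overline{I}$, and by the uniqueness statement above $J=K(\beta\Z)$. Hence $\overline{A}\supset K(\beta\Z)$. For ($\Rightarrow$), if $\overline{A}\supset K(\beta\Z)$ then $\overline{A}$ contains a two-sided (in particular, right) ideal, so Proposition \ref{prop:CS and right ideals} gives that $A$ is $\CS$, and Theorem \ref{thm:CSinZ} concludes that $\Z\setminus A$ is not uniform sub-Szemer\'edi.

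The only real obstacle is the semigroup-theoretic translation of Theorem \ref{thm:intro_virt_abelian_not_CS-dcmpsbl}, namely that virtual abelianness forces a unique minimal closed right ideal. I would verify this by contradiction: if $I_1,I_2$ are distinct minimal closed right ideals then they are disjoint (as is standard), and the ``largeness'' of $\CS$ sets (they are closed upward under inclusion, as noted in the introduction) together with Proposition \ref{prop:CS and right ideals} would allow one to produce disjoint $\CS$ subsets $A_1\supset \pi^{-1}(I_1)$, $A_2\supset\pi^{-1}(I_2)$ partitioning $\Z$ (after enlarging one to absorb the remaining elements), contradicting Theorem \ref{thm:intro_virt_abelian_not_CS-dcmpsbl}. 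Everything else is a bookkeeping exercise combining the three earlier results.
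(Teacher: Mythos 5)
Your proof is correct and follows essentially the same route as the paper's: chain together Proposition~\ref{prop:CS and right ideals} (``$A$ is $\CS$ iff $\overline{A}$ contains a right ideal''), Theorem~\ref{thm:virt_abelian_not_CS-dcmpsbl} (uniqueness of the minimal closed right ideal in $\beta\Z$), and Theorem~\ref{thm:CSinZ}. The one thing you could have skipped entirely is the last paragraph: you treat ``virtual abelianness forces a unique minimal closed right ideal'' as an unproved ``translation'' that still needs verification, but Theorem~\ref{thm:virt_abelian_not_CS-dcmpsbl} in Section~2 is already phrased exactly in these semigroup terms (``$\beta G$ contains exactly one minimal closed right ideal''), so it can be cited directly. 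Your proposed re-derivation also runs in the reverse logical direction from the paper: you want to deduce uniqueness from the impossibility of a $\CS$ partition, whereas the paper proves uniqueness directly via the $\IT(r(x))$ argument and then derives the partition statement as a consequence. (The equivalence of the two formulations is fine, via the clopen-separation argument in a zero-dimensional compactum, but it's redundant here; also the symbol $\pi$ appears undefined.) One small imprecision worth noting: from Proposition~\ref{prop:finite orbit} one gets that the unique minimal closed right ideal equals $\overline{K(\beta\Z)}$; whether this literally equals $K(\beta\Z)$ is beside the point, since only the containment $J\supset K(\beta\Z)$ is needed, and that containment does hold.
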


More broadly, many of our results admit natural reformulations in terms of the Stone--\v{C}ech compactification, though we refrain from translating each one explicitly.

Beyond the introduction, the paper is organized into three sections. Section 2 explores the connection between $\CS$ sets and closed right ideals in the Stone--\v{C}ech compactification $\beta G$ of a group $G$. We establish a dichotomy: $\beta G$ always contains either one or infinitely many minimal closed right ideals. In the case of virtually abelian groups, we show that $\beta G$ has exactly one such ideal, which implies that these groups cannot be partitioned into two $\CS$ sets. We also introduce the notion of intersection translates of a minimal closed right ideal and relate it to our central question. Section 3 focuses on $\CS$ subsets of $\mathbb{Z}$, providing a complete characterization. Section 4 demonstrates that $\CS$ sets can have arbitrarily small density.

\section{CS sets and their closures in the Stone--\v{C}ech compactification}

We start with the definition of completely syndetic sets.

\begin{dfn} \label{dfn:CS}
Let $G$ be a discrete group.  A subset $A \subset G$ is \define{$n$-syndetic} if 
there exists a finite set $F_n \subset G$ such that for any subset $S \subset G$ of size $|S| \leq n$
there exists some $f \in F_n$ such that $S \subset f A$.

$A$ is called \define{completely syndetic}, or $\CS$, if $A$ is $n$-syndetic for all $n$.
\end{dfn}

Subsets that are $1$-syndetic are simply known as \define{syndetic}.

\begin{remark} \label{rem:every_group_is_syndetic_decomposable}
Bella and Malykhin \cite{BellaMalykhin99} asked if every discrete group can be partitioned into two syndetic sets. For instance, in the group of integers, the even and odd numbers form such a partition --- both are syndetic, as they are translations of a finite index subgroup. Although Protasov \cite{Protasov03} later provided an ad hoc solution, a more conceptual answer was already implicit in earlier work by Balcar and Franek \cite{BalcarFranek97}, in their study of the universal minimal space of a semigroup.

Let us briefly describe this approach in the setting of groups. A \define{minimal} $G$-space is a compact Hausdorff space equipped with a continuous action of $G$ such that it contains no proper nonempty closed $G$-invariant subsets. Every group $G$ admits a {\em universal} minimal $G$-space --- one through which every other minimal $G$-space factors. If $G$ is countable discrete, then this universal minimal space is nontrivial totally (in fact, extremally) disconnected space.

The collection of clopen subsets of this universal space hence forms a nontrivial $G$-Boolean algebra. It is a fact that this algebra is isomorphic to an invariant Boolean subalgebra of the power set of $G$ (with $G$ acting by left translations), which is maximal with respect to the property that every nontrivial subset in this collection is syndetic. In particular, this implies the existence of a subset $A \subset G$ such that both $A$ and its complement $G \setminus A$ are syndetic. For more details, see \cite{BalcarFranek97}. 

In a similar spirit, Kennedy, Raum, and Salomon studied the universal minimal proximal $G$-space \cite{KenRaumSal22}. A $G$-space is said to be \define{proximal} if for any two points $x$ and $y$ in the space, there exists a net $(g_\alpha)$ in $G$ such that $\lim g_\alpha x = \lim g_\alpha y$. As with the minimal case, there exists a universal minimal proximal $G$-space, which is totally (in fact, extremally) disconnected. The $G$-Boolean algebra of its clopen subsets is isomorphic to a $G$-invariant Boolean subalgebra of the power set of $G$, and is maximal with respect to the property that every nontrivial element is {\em completely syndetic}.

A notable difference from the minimal setting is that this Boolean algebra can be trivial; such groups are called \define{strongly amenable}. This occurs precisely when $G$ has no nontrivial ICC quotient
(\ie no nontrivial quotient with the property that every nontrivial conjugacy class is infinite). In the case where $G$ is finitely generated, this is equivalent to $G$ being virtually nilpotent. For further discussion, see \cite{FriTamVa18}.
\end{remark}

Completely syndetic subsets of a group also arise naturally in the study of the Stone--\v{C}ech compactification of the group. Recall that any group $G$ acts continuously on its Stone--\v{C}ech compactification $\beta G$ from both the left and right. Extending each of these actions to a continuous semigroup action of $\beta G$ on itself yields two distinct semigroup structures on $\beta G$. Here, we fix the first of these structures, under which $\beta G$ forms a {\em right} topological semigroup. By \define{right topological semigroups}, we mean semigroups $S$ equipped with a Hausdorff topology such that for every $s \in S$ multiplication by $s$ from the right, namely the map $t \mapsto ts$, is a continuous function of $t$. For more information on the structure and properties of $\beta G$ as a right topological semigroups see~\cite[Chapter 4]{HindmanStrauss}.

In any semigroup, one may consider right or left ideals. A subset $R \subset \beta G$ is a \define{right ideal} if $Rx := \{ yx \ : \ y \in R \} \subset R$ 
for all $x \in \beta G$.
A right ideal $R$ is called \define{minimal} if it does not strictly contain any other nontrivial right ideal.
Similar concepts exist for left ideals. 

Right topological semigroups that are also compact enjoy the property that their right and left ideals always contain {\em minimal} right and left ideals, respectively. Due to the right continuity, left ideals are automatically closed; right ideals, however, are generally not. Another important property of compact right topological groups $S$ is that they have a unique smallest two-sided ideal, denoted by $K(S)$, which is the union of all minimal left ideals of $S$ and also the union of all minimal right ideals of $S$. For more details on compact right topological semigroups, and proofs of the above facts, 
see \cite[Chapter 2]{HindmanStrauss}.

The following proposition translates the notion of $\CS$ sets from the underlying group $G$ to its Stone--\v{C}ech compactification $\beta G$.

\begin{prop}[Proposition 3.14 of \cite{KenRaumSal22}] 
\label{prop:CS and right ideals}
    A set $A \subset G$ is $\CS$ if and only if its closure $\overline{A}$ in $\beta G$ contains a right ideal.
\end{prop}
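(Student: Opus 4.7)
The plan is to reformulate both directions as the existence of $p \in \beta G$ with $p \beta G \subset \overline{A}$. This suffices because $p \beta G$ is itself a right ideal of $\beta G$, and conversely, if $R \subset \overline{A}$ is any right ideal and $p \in R$, then $p \beta G \subset R \subset \overline{A}$. Throughout I shall use the standard identity
\[
A \in p q \iff \{g \in G : g^{-1}A \in q\} \in p,
\]
which comes from continuously extending the left action of $G$ on $\beta G$.

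For the forward direction, assume $A$ is $\CS$. For each $q \in \beta G$, set $C_q := \{p \in \beta G : A \in pq\} = \rho_q^{-1}(\overline{A})$. Since $\rho_q(p)=pq$ is continuous (the right topological property), each $C_q$ is closed, and ``$p\beta G \subset \overline{A}$'' amounts to $p \in \bigcap_q C_q$. By compactness of $\beta G$ this intersection is nonempty provided $\{C_q\}$ has the finite intersection property. Via the identity above, for $q_1,\ldots,q_k \in \beta G$, $\bigcap_i C_{q_i}$ is nonempty iff there is $g \in G$ with $g^{-1}A \in q_i$ for all $i$. I produce such $g$ from $k$-syndeticity: there is a finite $F \subset G$ such that every tuple $(x_1,\ldots,x_k) \in G^k$ satisfies $\{x_1,\ldots,x_k\} \subset fA$ for some common $f \in F$; substituting $g = f^{-1}$ yields $G^k \subset \bigcup_{g \in F^{-1}}(g^{-1}A)^k$. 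Taking closures in $(\beta G)^k$ (finite unions and finite products commute with closure) gives $(\beta G)^k = \bigcup_{g \in F^{-1}}(\overline{g^{-1}A})^k$, so the tuple $(q_1,\ldots,q_k)$ lies in some $(\overline{g^{-1}A})^k$, yielding the required $g$.

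For the reverse direction, fix $p \in R$, so $A \in pq$ for every $q \in \beta G$. Assume toward contradiction that $A$ is not $n$-syndetic for some $n$. Then for every finite $F \subset G$ the set
\[
T_F := \{(g_1,\ldots,g_n) \in G^n : \forall f \in F, \ \exists i \text{ with } g_i \in f(G \setminus A)\}
\]
is nonempty, and $T_{F \cup F'} \subset T_F \cap T_{F'}$ supplies the finite intersection property. Compactness of $(\beta G)^n$ then yields $(q_1,\ldots,q_n) \in \bigcap_F \overline{T_F}$. Since $\overline{T_{\{f\}}}$ is the union over $i$ of the sets of tuples whose $i$-th coordinate lies in $\overline{f(G \setminus A)}$, I conclude that for every $f \in G$ some $i$ satisfies $fA \notin q_i$. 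On the other hand, $A \in pq_i$ means $\{g : g^{-1}A \in q_i\} \in p$ for each $i$, so the intersection over $i$ of these sets belongs to $p$ and is nonempty; pick $g_0$ in it to obtain $g_0^{-1}A \in q_i$ for all $i$. Applying the preceding observation at $f = g_0^{-1}$ (so that $fA = g_0^{-1}A$) produces some $i_0$ with $g_0^{-1}A \notin q_{i_0}$, contradicting $g_0^{-1}A \in q_{i_0}$.

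The main technical pitfall I anticipate is keeping the inverse conventions straight: the definition of $\CS$ is stated via left translates $fA$, whereas multiplication on $\beta G$ most naturally produces the expressions $g^{-1}A$ and $\{g : g^{-1}A \in q\}$. The argument is arranged so that the substitutions $g = f^{-1}$ in the forward direction and $f = g_0^{-1}$ in the reverse direction precisely align these two sides, reducing a compactness statement about many ultrafilters to a single-coordinate contradiction.
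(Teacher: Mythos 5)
Your argument is correct. Note, however, that the paper itself does not prove this proposition --- it cites it as Proposition 3.14 of \cite{KenRaumSal22} --- so there is no in-text proof to compare against. Your reduction of both directions to the single condition $p\beta G \subset \overline{A}$, the use of the multiplication formula $A \in pq \iff \{g : g^{-1}A \in q\} \in p$, and the compactness arguments in $(\beta G)^k$ (closure commuting with finite unions and finite products, FIP of the $C_q$'s in the forward direction, FIP of the $\overline{T_F}$'s in the reverse direction) are all sound, and the bookkeeping of inverses via $g=f^{-1}$ and $f=g_0^{-1}$ is handled correctly. A minor stylistic remark: the reverse direction can be stated without contradiction by observing directly that $p\beta G\subset\overline{A}$ forces $\bigcup_{g\in G}(\overline{g^{-1}A})^n = (\beta G)^n$, and extracting a finite subcover yields the witnessing set $F_n$; but this is the same argument in contrapositive form.
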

As we mentioned above, $\beta G$ has the property that its right ideals contain minimal right ideals (since it is a compact right topological semigroup). In particular, subsets of $G$ are $\CS$ if and only if their closures in $\beta G$ contain a closed minimal right ideal. 

\begin{dfn}
    For an element $x \in \beta G$, the \define{principal closed right ideal} generated by $x$ is defined 
    to be $r(x):=\overline{x \beta G}$.
\end{dfn}
Obviously, closed minimal right ideals are always principal closed right ideals, namely, of the form $r(x)$ for some $x \in K(\beta G)$, where $K(\beta G)$ is the unique smallest two-sided ideal of $\beta G$. In fact, in this case, $x$ can always be taken to be an idempotent (\ie $x^2 = x$),
see \cite[Theorem 2.7]{HindmanStrauss}.

As we explained in Remark \ref{rem:every_group_is_syndetic_decomposable} every discrete countable group can be decomposed into two syndetic sets. But when can a group be decomposed into two $\CS$ sets?  In view of Proposition
\ref{prop:CS and right ideals}, this question automatically translates to the existence of two distinct minimal closed right ideals in $\beta G$.

In fact, the number of minimal closed right ideals exhibits a dichotomy, as we now explain.

\begin{prop}\label{pn:dichotomy}
Let $G$ be a discrete group. Then $\beta G$ has either a unique minimal closed right ideal or infinitely many.
\end{prop}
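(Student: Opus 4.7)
The plan is to prove the contrapositive: if $\Omega:=\{R_1,\dots,R_n\}$ is the finite collection of all minimal closed right ideals of $\beta G$, then $n=1$. Assume $n\ge 2$ for contradiction. The argument has three steps: transitivity of the $G$-action, reduction to a finite-index normal subgroup $N$, and a fixed-point collision inside $\beta N$.

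First, I would show that the left $G$-action on $\Omega$ (each $g\in G$ is a homeomorphism of $\beta G$ mapping minimal closed right ideals to minimal closed right ideals) is transitive. If not, two distinct orbits $\mathcal{O},\mathcal{O}'$ would give pairwise disjoint $G$-invariant closed right ideals $\mathcal{R}=\bigcup_{R\in\mathcal{O}}R$ and $\mathcal{R}'=\bigcup_{R\in\mathcal{O}'}R$; by density of $G$ in $\beta G$ and right-continuity of multiplication, any $G$-invariant closed set $\mathcal{S}$ satisfies $\beta G\cdot\mathcal{S}\subseteq\mathcal{S}$ (write $x=\lim g_\alpha$ with $g_\alpha\in G$ and use $g_\alpha y\in\mathcal{S}$). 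Hence $\mathcal{R},\mathcal{R}'$ are two-sided ideals and both contain the smallest two-sided ideal $K(\beta G)$, contradicting $\mathcal{R}\cap\mathcal{R}'=\emptyset$.

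Next, let $N\triangleleft G$ be the kernel of $G\to\mathrm{Sym}(\Omega)$. It is a clopen, normal, finite-index subgroup, so $\beta N\subseteq\beta G$ is a clopen closed subsemigroup. I would then prove that for each $i$, the set $R_i\cap\beta N$ is a minimal closed right ideal of $\beta N$. Non-emptiness: an Ellis--Numakura idempotent $e\in R_i$ lies in some unique coset closure $\overline{gN}$, and writing $e=\lim h_\alpha$ with $h_\alpha\in gN$, right-continuity gives $e=e^2=\lim eh_\alpha\in\overline{gN\cdot gN}=\overline{g^2N}$ by normality, forcing $gN=g^2N$, hence $g\in N$ and $e\in\beta N$. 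Minimality: for a nonempty closed right ideal $J\subseteq R_i\cap\beta N$ of $\beta N$, the set $\overline{J\cdot\beta G}$ is a closed right ideal of $\beta G$ inside $R_i$, hence equals $R_i$ by minimality. Using that $\beta N$ is clopen (so $\overline{A}\cap\beta N=\overline{A\cap\beta N}$) and that normality of $N$ keeps $J\cdot g\beta N$ inside the coset $g\beta N$, one obtains $(J\cdot\beta G)\cap\beta N=J\cdot\beta N=J$, whence $R_i\cap\beta N=\overline{J}=J$.

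The contradiction is then immediate. The $n\ge 2$ sets $R_1\cap\beta N,\dots,R_n\cap\beta N$ are pairwise distinct minimal closed right ideals of $\beta N$. Applying the first step to $\beta N$ shows the $N$-action on its set of minimal closed right ideals is transitive; yet every $n\in N$ fixes each $R_i\cap\beta N$ (since $N$ acts trivially on $\Omega$ and $n\beta N=\beta N$), so each $R_i\cap\beta N$ is an $N$-fixed point of a transitive action on a set of size $\ge 2$ --- impossible. I expect the main technical obstacle to be the minimality of $R_i\cap\beta N$ inside $\beta N$, where both the normality of $N$ and the clopenness of $\beta N$ are used essentially; once that is established, the three-step structure is rigid.
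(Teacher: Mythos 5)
Your approach is genuinely different from the paper's, and this is worth recording. The paper splits on strong amenability: if $G$ is not strongly amenable it cites the result of Kennedy--Raum--Salomon that $G$ contains infinitely many pairwise disjoint $\CS$ sets, and if $G$ is strongly amenable it shows that the $G$-action by left translation on the finite set $R$ of minimal closed right ideals is both minimal and proximal (the proximality being the key step, via the subnet argument), whence $|R|=1$ by the very definition of strong amenability. Your proof avoids strong amenability entirely and instead works inside the finite-index normal kernel $N$; if made precise it is more self-contained, at the cost of being substantially longer and more delicate.

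There are, however, two genuine gaps. First, the step ``$e=e^2=\lim e h_\alpha$'' is not valid: it uses continuity of $q \mapsto eq$ at $e$, and in the right-topological structure on $\beta G$ \emph{left} multiplication by a point of $\beta G \setminus G$ is not continuous. What right-continuity gives is $e^2 = \lim_\alpha h_\alpha e$, and then one must further expand each $h_\alpha e = \lim_\beta h_\alpha h_\beta$ using continuity of $\lambda_{h_\alpha}$ for $h_\alpha \in G$. The cleanest fix is to observe that the coset map $\beta G \to G/N$ (sending $x$ to the unique $gN$ with $x \in \overline{gN}$) is a continuous semigroup homomorphism onto the finite group $G/N$ — the proof of this is exactly the two-step limit argument just described, using normality of $N$ — and idempotents must map to the identity coset. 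Second, in the final step you invoke transitivity of the $N$-action on the minimal closed right ideals of $\beta N$, but your transitivity argument (via unions over orbits being closed two-sided ideals) requires the set of minimal closed right ideals of $\beta N$ to be \emph{finite}, and you have not established this for $\beta N$; you have only exhibited $n$ of them. This gap can be closed, but a shorter route is available: you have already shown that $N$ fixes each $R_i$ and that $n\beta N=\beta N$, so $R_i\cap\beta N$ is a left-$N$-invariant closed right ideal of $\beta N$. The same density/right-continuity argument you used for the orbit unions then shows $R_i\cap\beta N$ is a \emph{two-sided} ideal of $\beta N$, hence contains $K(\beta N)\neq\emptyset$; since $R_1\cap\beta N$ and $R_2\cap\beta N$ are disjoint, this is a contradiction with no need to reinvoke transitivity.
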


\begin{proof}
If $G$ is not strongly amenable, then by \cite{KenRaumSal22}, there exist infinitely many disjoint $\CS$ sets in $G$, which implies that $\beta G$ contains infinitely many minimal closed right ideals. Thus, we may assume that $G$ is strongly amenable.

Suppose that $\beta G$ has only finitely many minimal closed right ideals. Let $R$ denote the finite set of these right ideals. 

Let $x_1, x_2, y \in \beta G$ with $r(x_1), r(x_2), r(y) \in R$, and suppose $y = \lim_\alpha g_\alpha$ for some net $(g_\alpha)_{\alpha}$ in $G$. Since $R$ is finite, by passing to a subnet, we may assume without the loss of generality that there are $r(z_1), r(z_2) \in R$ such that $r(g_\alpha x_1) = r(z_1)$ and $r(g_\alpha x_2) = r(z_2)$ for all $\alpha$.

Then, $yx_i = \lim g_\alpha x_i \in r(z_i)$, and thus $r(yx_i) \subset r(z_i) \cap r(y)$, for each $i=1,2$. By minimality, this implies $r(z_1) = r(z_2) = r(y)$. Hence, equipping $R$ with the discrete topology and with the continuous action of $G$ by left translations, we see that the action of $G$ on $R$ is minimal and proximal.  Since $G$ is strongly amenable we can conclude that $R$ must be a singleton.
\end{proof}

Consider the set of all $x\in \beta G$ whose principal closed right ideal is minimal, that is,   
\[
    M=\{x\in \beta G : r(x) \text{ is a minimal closed right ideal}\}.
\]

\begin{prop}
\label{prop:finite orbit}
Let $G$ be a discrete group with $K(\beta G)$ the smallest two-sided ideal of $\beta G$, and fix $x \in M$. Let $J_x:= \bigcup_{g \in G} r(gx)$. Then $J_x$ and $M$ are both right ideals and left $G$-invariant. In particular,
\[
\overline{J_x}=\overline{M}=\overline{K(\beta G)}.
\]
As a result, $\{ r(gx) \ : \ g \in G \}$ is finite for some $x \in M$ if and only if $\beta G$ contains a unique minimal closed right ideal.
\end{prop}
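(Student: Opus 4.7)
The plan is to verify, one by one, the four separate assertions (two ideal properties, two invariances, a closure equality, and the finiteness equivalence), with everything flowing from a single feature of the compact right topological semigroup $\beta G$: for $h\in G$ the map $t\mapsto ht$ is a homeomorphism, while for general $y\in\beta G$ only right multiplication is continuous.

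First I would check that $J_x$ and $M$ are right ideals. For $J_x$, each $r(gx)=\overline{gx\beta G}$ is the closure of a right ideal, hence itself a closed right ideal by right continuity of multiplication, so their union is a right ideal. For $M$: if $y\in M$ and $z\in\beta G$, then $r(yz)\subset r(y)$ is a non-empty closed right ideal contained in a minimal closed right ideal, so $r(yz)=r(y)$ and $yz\in M$. Left $G$-invariance follows because $t\mapsto ht$ is a homeomorphism: $h\cdot r(gx)=\overline{hgx\beta G}=r(hgx)\subset J_x$, and analogously $r(hx)=h\cdot r(x)$ is again a minimal closed right ideal — pull back any closed sub-right-ideal through $h^{-1}$ to obtain a closed right ideal in the minimal $r(x)$, and conclude by minimality.

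Next I would upgrade $\overline{J_x}$ and $\overline{M}$ from right ideals to two-sided ideals. The left $G$-invariance passes to the closure since $t\mapsto ht$ is continuous; for arbitrary $y\in\beta G$, writing $y=\lim y_\alpha$ with $y_\alpha\in G$ and using right continuity of $t\mapsto tz$ for each fixed $z$ in the closure gives $yz=\lim y_\alpha z\in\overline{J_x}$, and similarly for $\overline{M}$. Hence both closures contain $\overline{K(\beta G)}$. For the reverse inclusion note $J_x\subset M$, so it suffices to show $M\subset\overline{K(\beta G)}$; for $y\in M$, pick $w\in r(y)\cap K(\beta G)$, which exists because any right ideal of a compact right topological semigroup contains a minimal right ideal and these live inside $K(\beta G)$ (Hindman--Strauss, Ch.~2). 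Then $r(w)\subset r(y)$ is a non-empty closed right ideal, forcing $r(w)=r(y)$ by minimality; and $r(w)=\overline{w\beta G}\subset\overline{K(\beta G)}$ since $K(\beta G)$ is a two-sided ideal. So $y\in r(y)\subset\overline{K(\beta G)}$.

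The ``as a result'' equivalence then follows: if $\{r(gx):g\in G\}$ is finite, $J_x$ is a finite union of closed sets, hence closed, so $J_x=\overline{J_x}=\overline{K(\beta G)}$. Any minimal closed right ideal $R$ lies in $\overline{K(\beta G)}=\bigcup_g r(gx)$, and two intersecting minimal closed right ideals coincide (a common point $w$ yields $r(w)\subset R\cap r(gx)$, and minimality forces $R=r(w)=r(gx)$). So $R\in\{r(gx):g\in G\}$, meaning there are only finitely many minimal closed right ideals; Proposition~\ref{pn:dichotomy} then gives exactly one. The converse is immediate.

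The main obstacle I anticipate is the disciplined bookkeeping forced by one-sided continuity — in particular, promoting left $G$-invariance of $J_x$ and $M$ to left $\beta G$-invariance of their closures requires approximating arbitrary elements of $\beta G$ by group elements and carefully invoking right continuity. The same asymmetry underlies the pullback argument showing $h\cdot r(x)$ is still minimal, where one relies on left multiplication by $h\in G$ being a homeomorphism rather than just continuous.
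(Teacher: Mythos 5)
Your proof is correct and follows essentially the same approach as the paper: establish that $J_x$ and $M$ are right ideals and left $G$-invariant via the $r(xy)\subset r(x)$ observation and the homeomorphism property of left translation by group elements, note $J_x\subset M\subset\overline{K(\beta G)}$, and conclude from the fact that $\overline{K(\beta G)}$ is the smallest closed two-sided ideal. You are somewhat more explicit than the paper in two places the paper leaves implicit --- promoting left $G$-invariance of $J_x$ to the assertion that $\overline{J_x}$ is a genuine two-sided ideal (via approximating elements of $\beta G$ by nets in $G$ and right continuity), and proving $M\subset\overline{K(\beta G)}$ by explicitly locating a point $w\in r(y)\cap K(\beta G)$ --- but these are exactly the details one would need to spell out, so there is no substantive divergence.
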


\begin{proof}
Fix $x \in M$.
Note that $J_x$ is a right ideal and that it is left $G$-invariant.

Also, we always have $r(xy)\subset r(x)$ for $x,y \in \beta G$. Thus, if $x$ happens to be in $M$, by the minimality of $r(x)$ we must have that $r(xy)=r(x)$, and in particular, $r(xy)$ is minimal too. Thus, $M$ is a right ideal.

If $g \in G$ and $x \in M$ then $r(gx)$ is a minimal closed right ideal. Indeed, if $r(y) \subset r(gx)$ then $r(g^{-1} y) \subset r(x)$, so that $r(g^{-1} y) = r(x)$ by the minimality of $r(x)$, implying that  $r(y) = r(gx)$. Thus, $M$ is left $G$-invariant.

Now, note that $J_x \subset M \subset \overline{K(\beta G)}$. As $\overline{J_x}$ is a closed two sided ideal contained in the smallest one $\overline{K(\beta G)}$, the equality $\overline{J_x}=\overline{M}=\overline{K(\beta G)}$ must hold.

Finally, if $\{ r(gx) \ : \ g \in G \}$ is finite, then $J_x = \bigcup_{g \in G} r(gx)$ is closed,
so $J_x = \overline{M}=\overline{K(\beta G)}$. Thus, for every minimal closed right ideal $r(y)$ of $\beta G$, $y$ belongs to $r(gx)$ for some $g \in G$, and by minimality is equal to $r(gx)$. We conclude that $\beta G$ has finitely many minimal closed right ideals, so 
by dichotomy presented in Proposition \ref{pn:dichotomy}, it has a unique minimal closed right ideal.
\end{proof}

Given a nonempty subset $A$ of $\beta G$, we would like to measure how much it is invariant from the left, or, in other words, how many of its left translates it intersects. More precisely, for $\emptyset \neq A \subseteq \beta G$, we consider the sets of ``intersecting translates" of $A$, 
\[
\IT(A)=\{g \in G : A \cap gA \neq \emptyset\}.
\]
Note that $\IT(A)$ is symmetric (i.e. $g \in \IT(A)$ if and only if $g^{-1} \in \IT(A)$) and that $e \in \IT(A)$.
Also $\IT(gA)=g\IT(A)g^{-1}$ for every $g \in G$. 
It is simple to verify that if $A$ is left $G$-invariant (i.e. $gA \subset A$ for every $g \in G$) then $\IT(A) = G$,
and if $A$ is right $G$-invariant (i.e. $Ag \subset A$ for every $g \in G$) then $\IT(A)$ contains the center $Z(G)$ of $G$.

We are mainly interested in the case where $A=r(x)$ for some $x\in \beta G$. Note that if $x$ happens to be in $M$, then $r(x) \cap r(gx) \neq \emptyset$ if and only if $r(x)=r(gx)$. Thus, in this case, $\IT(r(x))$ is a {\em subgroup} of $G$ containing the center $Z(G)$. 

\begin{thm} \label{thm:IT r(x)}
Let $G$ be a discrete group. Then the following conditions are equivalent:
\begin{enumerate}
\item There is a unique minimal closed right ideal in $\beta G$.
\item $\IT(r(x))=G$ for every $x \in M$.
\item $[G:\IT(r(x))] < \infty$ for some $x \in M$.
\end{enumerate}
\end{thm}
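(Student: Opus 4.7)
The plan is to prove the cyclic implications $(1) \Rightarrow (2) \Rightarrow (3) \Rightarrow (1)$, with $(2) \Rightarrow (3)$ being immediate since $[G:G] = 1 < \infty$. Two preliminary observations will do the heavy lifting. First, for $g \in G$ left multiplication by $g$ is a homeomorphism of $\beta G$, so $r(gx) = \overline{gx\beta G} = g \cdot \overline{x \beta G} = g \cdot r(x)$. Second, two distinct minimal closed right ideals of $\beta G$ must be disjoint: if $z$ lies in both, then $\overline{z \beta G}$ is a closed right ideal contained in each, and minimality forces them to coincide.

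For $(1) \Rightarrow (2)$, I would fix any $x \in M$ and any $g \in G$. By Proposition~\ref{prop:finite orbit}, $M$ is left $G$-invariant, so $gx \in M$ and $r(gx)$ is a minimal closed right ideal. Uniqueness forces $r(gx) = r(x)$, and combining with $r(gx) = g \cdot r(x)$ gives $r(x) \cap g \cdot r(x) = r(x) \neq \emptyset$, so $g \in \IT(r(x))$. As $g$ was arbitrary, $\IT(r(x)) = G$.

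For $(3) \Rightarrow (1)$, I plan to show that the map $g \IT(r(x)) \mapsto r(gx)$ is a well-defined bijection from $G/\IT(r(x))$ onto the orbit $\{r(gx) : g \in G\}$; finiteness of the index then makes the orbit finite, and Proposition~\ref{prop:finite orbit} delivers a unique minimal closed right ideal. Since $gx, hx \in M$, the ideals $r(gx)$ and $r(hx)$ are minimal, hence either equal or disjoint. Using $r(gx) = g \cdot r(x)$, this yields
\[
r(gx) = r(hx) \iff r(x) \cap (g^{-1} h) \cdot r(x) \neq \emptyset \iff g^{-1} h \in \IT(r(x)),
\]
which is exactly the condition $g \IT(r(x)) = h \IT(r(x))$, establishing the bijection.

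The main obstacle is really just bookkeeping: keeping straight which subsets of $\beta G$ are ideals, which are minimal, and how the $G$-action by homeomorphisms commutes with closures and the ideal structure. Beyond Proposition~\ref{prop:finite orbit} and the two preliminary observations above, no additional machinery seems required.
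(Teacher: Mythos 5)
Your proof is correct and follows essentially the same route as the paper's: $(1)\Rightarrow(2)$ via left-$G$-invariance of $M$ and uniqueness, $(2)\Rightarrow(3)$ trivially, and $(3)\Rightarrow(1)$ via the well-defined injection between cosets of $\IT(r(x))$ and the orbit $\{r(gx):g\in G\}$, followed by an appeal to Proposition~\ref{prop:finite orbit}. You merely spell out two facts the paper leaves implicit (that $r(gx)=g\cdot r(x)$ and that distinct minimal closed right ideals are disjoint), which is fine.
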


\begin{proof} 
We start with $(1) \implies (2)$. Suppose that there is a unique minimal closed right ideal in $\beta G$. 
Then $r(x)=r(y)$ for every $x,y \in M$, and in particular $r(x)=r(gx)$ for every $x \in M$ and $g \in G$, 
so that $\IT(r(x))=G$ for all $x \in M$. 

The implication $(2) \implies (3)$ is obvious. 

We now show $(3) \implies (1)$.
Fix $x \in M$ and consider the map $r(gx) \mapsto g\IT(r(x))$.  One notes that this map is well defined and injective,
so that if $x \in M$ such that $[G:\IT(r(x))] < \infty$, then $\{ r(gx) \ : \ g \in G \}$ is finite.
By Proposition \ref{prop:finite orbit} we have that $r(y) = r(x)$ for all $y \in M$, \ie there is a unique minimal closed right ideal.
\end{proof}

\begin{cor}
If $G$ is not strongly amenable, then $\IT(r(x))$ is an infinite index subgroup of $G$ for every $x \in M$.
\end{cor}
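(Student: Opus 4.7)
The plan is to obtain this as an immediate consequence of the equivalence already established in Theorem~\ref{thm:IT r(x)}, via contrapositive. Condition (3) of that theorem asserts that there exists some $x \in M$ with $[G:\IT(r(x))] < \infty$; its negation is exactly the desired statement that $[G:\IT(r(x))]$ is infinite for every $x \in M$. Note that $M$ is nonempty: since $\beta G$ is a compact right topological semigroup, it contains minimal right ideals, each of which is of the form $r(x)$ for some idempotent $x \in K(\beta G) \subset M$.

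First I would recall that, by the equivalence in Theorem~\ref{thm:IT r(x)}, the failure of (3) is equivalent to the failure of (1), i.e.\ to the existence of more than one minimal closed right ideal in $\beta G$. So it suffices to verify that a non-strongly-amenable $G$ satisfies this property.

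Next, I would invoke the same input used in the proof of Proposition~\ref{pn:dichotomy}: if $G$ is not strongly amenable, then by \cite{KenRaumSal22} there exist infinitely many pairwise disjoint $\CS$ subsets of $G$. By Proposition~\ref{prop:CS and right ideals}, the closure in $\beta G$ of each such $\CS$ set contains a minimal closed right ideal, and since the sets are disjoint their closures meet distinct minimal closed right ideals. In particular, $\beta G$ admits more than one (indeed infinitely many) minimal closed right ideal, so condition (1) of Theorem~\ref{thm:IT r(x)} fails.

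Applying the contrapositive of $(3) \Rightarrow (1)$ then gives that $[G:\IT(r(x))] = \infty$ for every $x \in M$. There is no genuine obstacle here; the corollary is a direct packaging of Theorem~\ref{thm:IT r(x)}, Proposition~\ref{prop:CS and right ideals}, and the infinite-$\CS$-partition result from \cite{KenRaumSal22}.
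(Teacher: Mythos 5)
Your proof is correct and follows essentially the same line the paper intends: the corollary is a direct consequence of the contrapositive of $(3)\Rightarrow(1)$ in Theorem~\ref{thm:IT r(x)}, combined with the fact (already used in Proposition~\ref{pn:dichotomy}) that non-strongly-amenable groups admit infinitely many pairwise disjoint $\CS$ sets, and hence $\beta G$ has more than one minimal closed right ideal. The paper omits the proof precisely because it is this immediate packaging; your reconstruction, including the remark that disjoint subsets of $G$ have disjoint closures in $\beta G$ and hence yield distinct minimal closed right ideals, is accurate.
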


We now prove that in virtually abelian groups, any two $\CS$ sets must intersect.

\begin{thm}\label{thm:virt_abelian_not_CS-dcmpsbl}
 If $G$ is virtually abelian, then $\beta G$ contains exactly one minimal closed right ideal. In particular, the intersection of any two $\CS$ subsets of $G$ is again a $\CS$ subset.
\end{thm}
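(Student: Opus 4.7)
The plan is to verify condition (3) of Theorem~\ref{thm:IT r(x)}, namely to produce $x \in M$ with $[G : \IT(r(x))] < \infty$; this forces a unique minimal closed right ideal in $\beta G$, and the ``in particular'' clause follows quickly. Pick any finite-index abelian subgroup $H \le G$ (normality will not be needed) and identify $\beta H$ inside $\beta G$ with the clopen set of ultrafilters on $G$ that contain $H$.

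The key observation is a commutation relation: for every $x \in \beta H$ and $h \in H$ one has $hx = xh$ in $\beta G$. Indeed, writing $x = \lim_\alpha h_\alpha$ with $h_\alpha \in H$, continuity of left multiplication by $h \in G$ and commutativity of $H$ give $hx = \lim_\alpha h h_\alpha = \lim_\alpha h_\alpha h$, and continuity of right multiplication by $h$ in the right topological semigroup $\beta G$ identifies the last limit with $xh$. Consequently, for any $x \in \beta H \cap M$ and $h \in H$,
\[
r(hx) \;=\; \overline{(hx)\,\beta G} \;=\; \overline{(xh)\,\beta G} \;\subseteq\; \overline{x\,\beta G} \;=\; r(x),
\]
and the minimality of $r(x)$ forces $r(hx) = r(x)$, so $h \in \IT(r(x))$.

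It remains to exhibit a point of $M \cap \beta H$. Pick any $y \in M$ (nonempty, since $\beta G$ is a compact right topological semigroup). The partition $G = \bigsqcup_i g_i H$ into finitely many left cosets forces exactly one coset $g_i H$ to lie in the ultrafilter $y$, which means $H \in g_i^{-1} y$, i.e., $g_i^{-1} y \in \beta H$. Since $M$ is left $G$-invariant by Proposition~\ref{prop:finite orbit}, $x := g_i^{-1} y$ lies in $M \cap \beta H$. Combining with the previous paragraph, $H \subseteq \IT(r(x))$, so $[G : \IT(r(x))] \le [G : H] < \infty$, and Theorem~\ref{thm:IT r(x)} yields the unique minimal closed right ideal $R \subseteq \beta G$.

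For the ``in particular'' statement, let $A, B \subset G$ be $\CS$. By Proposition~\ref{prop:CS and right ideals} each of $\overline A$ and $\overline B$ contains a closed right ideal, hence contains a minimal one, which must be $R$ by uniqueness. Therefore $R \subseteq \overline A \cap \overline B = \overline{A \cap B}$ (the last equality holds since $\overline A$ and $\overline B$ are clopen), and Proposition~\ref{prop:CS and right ideals} gives that $A \cap B$ is $\CS$. The main potential obstacle would have been producing a point of $M$ inside $\beta H$, but the finite-index hypothesis disposes of this by a single ultrafilter translation, so no deeper structural input about virtually abelian groups is required.
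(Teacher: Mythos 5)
Your proof is correct, and it takes a genuinely different (though closely related) route from the paper's. The paper introduces the quasi-center $Q = \{g \in G : [G : C_G(g)] < \infty\}$ and proves the stronger statement that $Q \leq \IT(r(x))$ for \emph{every} $x \in \beta G$; the key step there is the subnet argument exploiting that $g \in Q$ has finitely many conjugates, yielding $gx = x g^h$ for some conjugate $g^h$. Since virtually abelian groups have $[G:Q] < \infty$, the claim follows directly from Theorem~\ref{thm:IT r(x)}. You instead fix a finite-index abelian subgroup $H$ and prove the sharper commutation $hx = xh$ for $h \in H$ and $x \in \beta H$, avoiding the quasi-center and the finiteness-of-conjugacy-class subnet argument; the cost is that you must then locate a point of $M$ inside $\beta H$, which you do via the standard finite-partition-by-cosets translation trick (the same device used in the ``only if'' direction of the paper's Proposition~\ref{prop:finite index}). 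The paper's quasi-center approach buys a cleaner statement that holds for all $x \in \beta G$ and absorbs the ``virtually'' aspect automatically through conjugation; your approach is more elementary (genuine commutativity rather than conjugation plus a subnet pass) but needs the extra ultrafilter bookkeeping. Both are valid, and your treatment of the ``in particular'' clause via clopen closures in $\beta G$ is also correct and is not spelled out in the paper's proof.
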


\begin{proof}
Let $Q$ be the quasi-center of $G$, \ie
$$ Q = \{ g \in G \ : \ [G:C_G(g)] < \infty \} $$
where $C_G(g) = \{ h \in G \ : \ hg=gh \}$ is the centralizer of $g$.
(It is easy to see that $Q$ is a subgroup, which basically follows from the facts that
$C_G(g^{-1} h) \geq C_G(g) \cap C_G(h)$,
and that the intersection of finite index subgroups is itself of finite index.)

First, we claim that $Q \leq \IT(r(x))$ (for any $x \in \beta G$).

Indeed, if $g \in Q$, then we can write $\{ g^h \ : \ h \in G \} = \{ g^{h_1} , \ldots, g^{h_n} \}$ for some $h_1, \ldots, h_n \in G$.
(Here $g^h = h^{-1} g h$.)
For a net $(g_\alpha)_\alpha$ in $G$ converging to $x \in \beta G$, we have that 
$g g_\alpha = g_\alpha g^{g_\alpha} \in g_\alpha \cdot \{ g^{h_1} , \ldots, g^{h_n} \}$.
Since the latter set is finite, by passing to a subnet we may assume without loss of generality that 
$g g_\alpha = g_\alpha g^h$ for some $h \in \{ h_1, \ldots, h_n \}$ and all $\alpha$.
This implies that 
$$ r(gx) \ni gx = \lim g g_\alpha = \lim g_\alpha g^h = x g^h \in r(x) $$ 
so that $r(gx) \cap r(x) \neq \emptyset$.

This proves that $Q \leq \IT(r(x))$ for any $x \in \beta G$.

Specifically, if $G$ is virtually abelian, $[G:Q] < \infty$, so that $[G: \IT(r(x))] < \infty$.
Taking $x \in M$ and using Theorem \ref{thm:IT r(x)} we conclude that there is a unique minimal closed
right ideal.
\end{proof}

As mentioned, it is shown in \cite{KenRaumSal22} that if $G$ is a finitely generated non-virtually-nilpotent group,
then $G$ contains an infinite sequence of pairwise disjoint $\CS$ sets,
and therefore $\beta G$ contains infinitely many different minimal closed right ideals.
This is in stark contrast to the virtually-abelian case in Theorem \ref{thm:virt_abelian_not_CS-dcmpsbl}.
We have not been able to determine the remaining situation, \ie when $G$ is virtually nilpotent but not virtually abelian.
Even for specific concrete groups such as the Heisenberg group this is not clear.

\begin{ques}\label{ques:nilpotent}
Let $G$ be a nilpotent group which is not virtually abelian.

Does $\beta G$ necessarily contain a unique minimal closed right ideal? 
\end{ques}

The finite index that arises for considering virtually nilpotent groups is not critical, as can be seen from the following.

\begin{prop}
\label{prop:finite index}
Let $H$ be a finite index subgroup of $G$.  Then $\beta H$ has a unique minimal closed right ideal if and only if 
$\beta G$ has a unique minimal closed right ideal.
\end{prop}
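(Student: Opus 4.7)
My plan is to compare the minimal closed right ideals of $\beta G$ and $\beta H$ directly, by exploiting a clopen coset decomposition of $\beta G$ over $\beta H$.

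First I would reduce to the case where $H$ is normal in $G$: the normal core $N = \bigcap_{g \in G} g H g^{-1}$ is a finite-index normal subgroup of $G$ contained in $H$, hence also of finite index in $H$. Establishing the proposition in the normal case and applying it to $N \lhd G$ and to $N \lhd H$ would chain the equivalences ``$\beta G$ has a unique minimal closed right ideal $\Leftrightarrow \beta N$ does $\Leftrightarrow \beta H$ does''. So assume $H \lhd G$, with coset representatives $g_1 = e, g_2, \ldots, g_m$. Since $G$ is discrete, each closure $\overline{g_i H} = g_i \beta H$ is clopen in $\beta G$, giving a disjoint decomposition $\beta G = \bigsqcup_{i=1}^m g_i \beta H$. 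Normality implies that conjugation by any $g \in G$ is a semigroup automorphism of $\beta H$, so $g \beta H g^{-1} = \beta H$; in particular, if $y \in g_j \beta H$ then $y g_j^{-1} \in \beta H$. This shuttling trick will be used repeatedly.

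For the direction ``$\beta H$ unique $\Rightarrow \beta G$ unique'', I would take the unique minimal closed right ideal $R_H$ of $\beta H$ and set $R_G := R_H \beta G$. Because $g_i^{-1} R_H g_i$ is a right ideal of $\beta H$, the product simplifies to $R_G = \bigsqcup_i R_H g_i$, a closed right ideal of $\beta G$ (each $R_H g_i$ is closed by continuity of right multiplication by $g_i$). For minimality, any nonempty closed right ideal $R' \subset R_G$ contains some $y = r g_i$ with $r \in R_H$, so $y g_i^{-1} = r \in R_H \cap R'$; then $R' \cap \beta H$ is a nonempty closed right ideal of $\beta H$, which by uniqueness contains $R_H$, forcing $R' \supset R_H \beta G = R_G$. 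For uniqueness, any second minimal closed right ideal $R'_G$ of $\beta G$ meets $\beta H$ by the shuttling trick, so $R'_G \cap \beta H$ contains $R_H$, whence $R'_G \supset R_G$, and minimality yields equality.

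For the converse, I would set $R_H := R_G \cap \beta H$; the shuttling trick shows $R_H$ is nonempty, and it is plainly a closed right ideal of $\beta H$. Given a nonempty closed right ideal $R'_H \subset R_H$ of $\beta H$, the product $R'_H \beta G = \bigsqcup_i R'_H g_i$ is a closed right ideal of $\beta G$ inside $R_G$, so by minimality equals $R_G$; intersecting with $\beta H$ isolates only the $i=1$ summand and yields $R'_H = R_H$. An analogous argument applied to any candidate second minimal closed right ideal $R''_H$ of $\beta H$ shows $R''_H \beta G \supset R_G$, whence $R''_H \supset R_H$ after intersecting, and minimality gives $R''_H = R_H$. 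I expect the main technical obstacle to be the non-closure of set products like $R_H \beta G$ in a general right topological semigroup; the clopen coset decomposition rescues this by reducing each product to a finite disjoint union $\bigsqcup_i R g_i$ of closed sets.
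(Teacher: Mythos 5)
Your argument is correct, and it takes a genuinely different route from the paper's. The paper also identifies $\beta H$ with $\overline H\subset\beta G$ and uses a clopen coset decomposition, but it works with both left and right coset representatives (no normalization) and the key identity $r(x)=\bigsqcup_{q\in R}r_H(x)q$; for one direction it argues by contrapositive (two distinct minimal closed right ideals of $\beta H$ give two \emph{disjoint} ones of $\beta G$), and for the other it invokes the $\IT$-machinery of Theorem~\ref{thm:IT r(x)}: it locates $x\in M$ with $H\le\IT(r(x))$, so $[G:\IT(r(x))]<\infty$, and cites that theorem to conclude uniqueness. Your proof avoids all of Theorem~\ref{thm:IT r(x)} and Proposition~\ref{prop:finite orbit}: by first passing to the normal core $N$ (so that left and right cosets of $N$ coincide, $g\beta N=\beta N g$, and conjugation is a homeomorphic semigroup automorphism of $\beta N$), you make the coset bookkeeping symmetric and can pass back and forth between the minimal closed right ideals of $\beta G$ and $\beta N$ directly, via $R\mapsto R\beta G$ and $R\mapsto R\cap\beta N$, checking minimality and uniqueness by hand in both directions. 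What the paper's route buys is integration with the $\IT$-framework developed in the same section (and reuses Proposition~\ref{pn:dichotomy} implicitly through Theorem~\ref{thm:IT r(x)}); what yours buys is self-containedness and a cleaner, more symmetric argument, at the small cost of the extra (standard) normal-core reduction and the need to justify that $g\mapsto g(\cdot)g^{-1}$ extends to a homeomorphism of $\beta G$, which you correctly do via continuity of left translation by group elements and right translation by arbitrary elements. One tiny point worth making explicit when you write it up: in ``$R'\cap\beta H$ by uniqueness contains $R_H$'' and ``$R''_H\beta G\supset R_G$'' you are using that every nonempty closed right ideal of a compact right topological semigroup contains a minimal closed right ideal (Zorn plus compactness), which the paper records in the surrounding discussion.
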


\begin{proof}
We identify $\beta H$ with $\overline{H} \subset \beta G$ and denote $r_H(x):=\overline{x\overline{H}}$, for $x \in \overline{H}$. We begin with a simple observation. Let $R$ and $L$ be some sets of representatives for the right and left cosets of $H$ in $G$, respectively. Then we have that $\bigsqcup_{p \in L} p H = G = \bigsqcup_{p \in R} Hp$, and ince $R$ and $L$ are finite we can conclude that
\[
\bigsqcup_{p \in L} p \overline{H}=\beta G=\bigsqcup_{q \in R}  \overline{H}q.
\]

A key observation we will use is that 
$$ r(x) = \bigsqcup_{q \in R} r_H(x) q . $$
This can be verified as the above union is a closed right ideals contained in $r(x)$ and containing $x$.

{\bf ``If'' direction:}
If $\beta H$ has two distinct minimal closed right ideals $r_H(x)$ and $r_H(y)$ for some $x,y \in \overline{H}$, then 
\[
r(x)=\bigsqcup_{q \in R}  r_H(x)q \quad \text{and} \quad  r(y)=\bigsqcup_{q \in R}  r_H(y)q.
\]
Since the union $\beta G=\bigsqcup_{q \in R}  \overline{H}q$ is disjoint, and since $r_H(x)q$ and $r_H(y)q$ are supported on $\overline{H}q$ for each $q\in R$, the closed right ideals $r(x)$ and $r(y)$ must be distinct (in fact, they are also minimal, but this is not required for our argument). In particular, $\beta G$ contains two distinct minimal closed right ideals.

{\bf ``Only if'' direction:}
Assume now that $\beta H$ has a unique minimal closed right ideal. We find some $x \in M$ such that $[G:\IT(r(x))] < \infty$. This, by Theorem \ref{thm:IT r(x)}, implies that $\beta G$ contains a unique minimal closed right ideal as well. 

Fix some $y \in M$. Since $\beta G=\bigsqcup_{p \in L} p \overline{H}$, there is some $p \in L$ such that $x:=p^{-1}y \in \overline{H}$. Note, however, that $x$ is also in $M$. Since $r(x)=\bigsqcup_{q \in R}  r_H(x)q$ we must have that $r_H(x)$ is  minimal too (as a closed right ideal of $\overline{H}$). Thus, $r_H(hx)=r_H(x)$ for every $h \in H$, and so 
$$
r(hx)=\bigsqcup_{q \in R}  r_H(hx)q=\bigsqcup_{q \in R}  r_H(x)q=r(x)$$ 
for every $h \in H$. We conclude that $H \leq \IT(r(x))$, and so $[G:\IT(r(x))] \leq [G:H] < \infty$.
\end{proof}

\section{$\CS$ sets in $\Z$}

In this section, we provide a characterization of completely syndetic sets of integers. We denote by $\N$ the set of natural numbers, including $0$. 
Let $\Omega$ denote the collection of all subsets of the natural numbers that are neither finite nor co-finite and that contain $0$, that is,
$$ \Omega := \{ A \subset \N \ : \ \sup A = \sup ( \N \setminus A) = \infty \ , \ 0 \in A \}. $$
In addition, set 
$$ \Psi  :  = \{ (\alpha_n, \beta_n)_{n=1}^\infty \ : \ \alpha_n , \beta_n \in \N \setminus \{0\} \}. $$
The following proposition shows that sets in $\Omega$ are in a 1--1 correspondence with sequences in $\Psi$ that preserves the information regarding the structure of sets in $\Omega$.

\begin{prop} \label{prop:correspondence}
For each $A \in \Omega$, let $\alpha_n(A)$ be the length of the $n$-th block of consecutive elements in $A$, and let $\beta_n(A)$ be the length of the $n$-th block of consecutive elements in $\mathbb{N} \setminus A$. Then the map
\[
A \mapsto (\alpha_n(A),
\beta_n(A))_{n=1}^\infty
\]
is a well-defined bijection from $\Omega$ onto $\Psi$.
\end{prop}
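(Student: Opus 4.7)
The plan is to show that the map is well-defined and that it has an explicit two-sided inverse.

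First I would establish that the sequence $(\alpha_n(A), \beta_n(A))_{n=1}^\infty$ is well-defined and lies in $\Psi$ for every $A \in \Omega$. The nontrivial point is that $A$ admits infinitely many maximal blocks of consecutive elements, and likewise $\N \setminus A$ does. This uses both conditions in the definition of $\Omega$: since $0 \in A$, the first block of $A$ starts at $0$; and since $A$ and $\N \setminus A$ are both unbounded, neither can eventually consist of a single infinite block. Indeed, if $A$ had finitely many maximal blocks, then after the last one all sufficiently large integers would lie in $\N \setminus A$, forcing $A$ to be finite, contradicting $\sup A = \infty$; the symmetric argument applies to $\N \setminus A$. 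Since consecutive $A$-blocks are separated by nonempty blocks of $\N \setminus A$, each $\alpha_n(A)$ is finite, hence in $\N \setminus \{0\}$; similarly for $\beta_n(A)$.

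Next I would construct the inverse map $\Phi : \Psi \to \Omega$ explicitly. Given $(\alpha_n, \beta_n)_{n=1}^\infty \in \Psi$, set $S_0 := 0$ and $S_n := \sum_{k=1}^{n}(\alpha_k + \beta_k)$, and define
\[
\Phi\bigl((\alpha_n, \beta_n)_n\bigr) \;:=\; \bigsqcup_{n=1}^\infty \bigl[S_{n-1},\, S_{n-1} + \alpha_n\bigr) \cap \N.
\]
Because $\alpha_n, \beta_n \geq 1$ for every $n$, the partial sums $S_n$ tend to infinity, so $\Phi$ produces a subset of $\N$ that contains $0$ and whose complement is $\bigsqcup_n [S_{n-1}+\alpha_n,\, S_n)$, also unbounded. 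Thus $\Phi$ maps into $\Omega$.

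Finally I would verify that $A \mapsto (\alpha_n(A), \beta_n(A))_n$ and $\Phi$ are mutual inverses. On the one hand, starting from $(\alpha_n, \beta_n) \in \Psi$, the block structure of $\Phi\bigl((\alpha_n, \beta_n)_n\bigr)$ is by construction given by alternating intervals of lengths $\alpha_1, \beta_1, \alpha_2, \beta_2, \ldots$, which recovers $(\alpha_n, \beta_n)_n$. On the other hand, starting from $A \in \Omega$, the first $A$-block is $[0, \alpha_1(A))$, followed by the first complement-block $[\alpha_1(A), \alpha_1(A) + \beta_1(A))$, and by induction on $n$ the $n$-th $A$-block is exactly $[S_{n-1}, S_{n-1} + \alpha_n(A))$ with $S_n$ defined as above from the sequence $(\alpha_k(A), \beta_k(A))_k$; thus $\Phi$ reconstructs $A$.

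I do not expect any serious obstacle here; the only subtlety worth emphasizing in the write-up is why the infinitude conditions in the definition of $\Omega$ are exactly what is needed to guarantee that the sequence $(\alpha_n(A), \beta_n(A))_n$ is indexed by all of $\N \setminus \{0\}$ rather than terminating.
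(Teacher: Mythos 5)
Your proposal is correct and follows essentially the same approach as the paper: define the block-length sequences, exhibit the explicit inverse via partial sums $S_n=\sum_{k\le n}(\alpha_k+\beta_k)$ (the paper writes the same thing with $\ell_{n+1}=r_n+\beta_n$), and check the two maps invert each other. You spend a bit more care than the paper on why $\Omega$'s hypotheses guarantee infinitely many blocks on both sides, which is a reasonable point to make explicit, but the substance is the same.
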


\begin{proof}
Given $A \in \Omega$, let $\alpha_n = \alpha_n(A)$ 
denote the length of the $n$-th block of consecutive elements in $A$,
and $\beta_n = \beta_n(A)$ 
denote the length of the $n$-th block of consecutive elements in $\mathbb{N} \setminus A$. 
More precisely, define $(\alpha_n,\beta_n)_{n=1}^\infty$ inductively, as follows: set
$$
\alpha_1:= \min \{ z \in \N \ : \ z+1 \not\in A \} +1,
$$
and define inductively for $n \geq 1$
\begin{align*}
\beta_{n} & = \min \left\{ z \in \N \ : \ z \geq \alpha_n+\sum_{i=1}^{n-1}(\alpha_i+\beta_i)   \ , \ z+1 \in A \right\} - \left(\alpha_n+\sum_{i=1}^{n-1}(\alpha_i+\beta_i)\right)+1
\\
\alpha_{n+1} & = \min \left\{ z \in \N \ : \ z \geq \sum_{i=1}^{n}(\alpha_i+\beta_i)  \ , \ z+1 \not\in A \right\} - \left(\sum_{i=1}^{n}(\alpha_i+\beta_i)\right)+1
\end{align*}
These are well defined under the assumptions of sets in $\Omega$, and $(\alpha_n, \beta_n)_{n=1}^\infty \in \Psi$.

Conversely, if $(\alpha_n, \beta_n)_{n=1}^\infty \in \Psi$, we define  $A \subset \N$ by setting
$\ell_1 = 0$ and inductively for $n \geq 1$,
$$ r_n = \ell_n + \alpha_n \qquad  \ell_{n+1} = r_n + \beta_n $$
and then 
$$ A = \bigsqcup_{n=1}^\infty \big( [\ell_n , r_n) \cap \N \big) . $$

It is simple to verify that these maps are inverses of each other.
\end{proof}

\begin{remark}\label{remark:syndetic=>beta_is_bdd}
Note that if $A^+ := A \cap \N \in \Omega$ for some syndetic set $A \subseteq \Z$ (\ie, $A$ is $1$-syndetic), then $\sup_n \beta_n(A^+) < \infty$. Similarly, if $A^- := (-A) \cap \N \in \Omega$, then $\sup_n \beta_n(A^-) < \infty$ as well. Conversely, if both $A^+$ and $A^-$ belong to $\Omega$, and both $\sup_n \beta_n(A^\pm)$ are finite, then A must be syndetic.

This follows directly from the fact that syndetic sets have bounded gaps --- equivalently, their complements are not thick (see the discussion preceding Corollary~\ref{cor:syndethick_yet_non_CS}). In fact, this is the origin of the term {\em syndetic}.

We can also give a direct proof: If $A$ is syndetic, there exists a finite set $F_1 \subset \Z$ such that for every $z \in \Z$, some $f \in F_1$ satisfies $z + f \in A$. Now suppose, toward a contradiction, that $\sup_n \beta_n = \infty$. Then for some large even $\beta_n > \max\{2|f| : f \in F_1\}$, there exists $z \in \N$ such that
\[
(z + F_1) \cap A \subset \{z - \tfrac{1}{2} \beta_n + 1, \ldots, z + \tfrac{1}{2} \beta_n - 1\} \cap A = \emptyset,
\]
contradicting the syndeticity of $A$. Therefore, $\sup_n \beta_n(A^+) < \infty$.   

Conversely, if both $A^+$ and $A^-$ are in $\Omega$ and both $\sup_n \beta_n(A^+)$ and $\sup_n\beta_n(A^-)$ are finite, choose some 
$b \geq \sup_n \{ \beta_n(A^+) , \beta_n(A^-) \}$.

Fix $z \in \N$. If $\{z, z+1, \ldots, z+b\} \subset \N \setminus A^+$, then there exists some $k$ such that $\beta_k(A^+) \geq b+1$, contradicting the choice of $b$. Thus we can conclude that for every $z \in \N$, there exists $f \in {0, 1, \ldots, b}$ such that $z + f \in A^+$.

A similar argument applied to $A^-$ shows that for every $z \in \N$, there exists $f \in \{-b,-b+1, \ldots, 0\}$ such that $-z + f \in A^-$.

So, for every $z \in \Z$, there exists $f \in F_1 := \{-b, -b+1, \ldots, b\}$ such that $z + f \in A$, and so $A$ is syndetic.
\end{remark}

We now turn to the classification of $\CS$ sets in $\Z$. In order to do so, we need to define a special class of sets we call ``uniform-sub-Szemer\'{e}di''.  

\begin{dfn}
An
\define{sub-arithmetic progression} of difference $D$ and length $L$ is a finite sequence of integers 
$p_1 < p_2 < \cdots < p_L$ such that $p_{n+1} - p_n \leq D$ for all $1 \leq n < L$.
\end{dfn}

\begin{dfn} \label{dfn:USS}
We say that $A \subset \Z$ is a \define{$D$-uniform-sub-Szemer\'{e}di} set, or $D$-$\USS$ set, 
if $A$ contains arbitrarily long sub-arithmetic progressions of difference $D$.

We say that $A$ is a \define{uniform-sub-Szemer\'{e}di} set, or $\USS$ set, if $A$ is $D$-$\USS$ for some $D$.
\end{dfn}

Note that a set $A$ is $\USS$ if and only if at least one of $A \cap \N$ or $(-A) \cap \N$ is $\USS$. Therefore, when discussing the $\USS$ property, we freely move between subsets of $\N$ and subsets of $\Z$.

The name {\em sub-Szemer\'{e}di set} comes from the following.
A set $A \subset \N$ is called a \define{Szemer\'{e}di} set if it contains arbitrarily long arithmetic progressions.
(Szemer\'{e}di in~\cite{szemeredi1969sets} proved the conjecture of Erd\fH{o}s and T\'{u}ran proposed in~\cite{erdos1936some} that any set with positive density is a Szemer\'{e}di set.)
The prefix {\em sub} is added because we only bound the difference from above, and do not insist on a constant distance.
The adjective {\em uniform} means that the distance is uniformly bounded for all of the sub-arithmetic progressions.

\begin{lem} \label{lem:CompNotUSS}
Let $A \in \Omega$ with the corresponding sequence $(\alpha_n, \beta_n)_{n=1}^\infty \in \Psi$. 

Then,
$\mathbb{N} \setminus A$ is not $\USS$ if and only if $\sup_n \beta_n < \infty$ and for every $D$ there exists some 
integer $L(D)>0$ such that for every $m \in \mathbb{N}$ there exists some $0 \leq j < L(D)$ such that $\alpha_{m+j} \geq D$.
\end{lem}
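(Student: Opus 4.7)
Set $B := \N \setminus A$. The key geometric observation, read off the block decomposition of Proposition~\ref{prop:correspondence}, is that $B$ is a disjoint union of consecutive-integer blocks $I_1, I_2, \ldots$ with $|I_n| = \beta_n$, and the distance from the last element of $I_n$ to the first element of $I_{n+1}$ equals $\alpha_{n+1}+1$. Consequently, any sub-arithmetic progression in $B$ of difference $\leq D$ is contained in a union $I_n \cup I_{n+1} \cup \cdots \cup I_{n+k}$ with $\alpha_{n+1}, \ldots, \alpha_{n+k} \leq D-1$; conversely, every such block-union is itself a sub-AP of difference $\leq D$ whose length is $\beta_n + \cdots + \beta_{n+k}$.

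\textbf{The $(\Leftarrow)$ direction} then follows directly. Fix $D \geq 1$, let $M := \sup_n \beta_n < \infty$ by condition~1, and let $L := L(D)$ be as in condition~2. For any sub-AP of difference $D$ spanning blocks $I_n, \ldots, I_{n+k}$, applying condition~2 at $m = n+1$ produces some $0 \leq j < L$ with $\alpha_{n+1+j} \geq D$; since $\alpha_{n+1}, \ldots, \alpha_{n+k} < D$, this forces $k \leq L-1$. Hence such a progression has length at most $LM$, so $B$ is not $D$-$\USS$; since $D$ was arbitrary, $B$ is not $\USS$.

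\textbf{For the $(\Rightarrow)$ direction} I will argue the contrapositive in two parts. If $\sup_n \beta_n = \infty$, then $B$ contains arbitrarily long blocks of consecutive integers, hence arbitrarily long sub-APs of difference $1$, so $B$ is $\USS$; this yields condition~1. If instead condition~2 fails for some $D$, then for every $L$ one finds $m$ with $\alpha_m, \ldots, \alpha_{m+L-1}$ all $< D$; by the geometric observation, the union $I_{m-1} \cup I_m \cup \cdots \cup I_{m+L-1}$ (or $I_1 \cup \cdots \cup I_L$ when $m = 1$) is a sub-AP of difference $\leq D$ whose length is at least $L$. Letting $L \to \infty$ shows $B$ is $D$-$\USS$, again contradicting $B$ not being $\USS$.

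\textbf{Main obstacle.} No deep input is required beyond the block correspondence from Proposition~\ref{prop:correspondence}; the argument is essentially a direct translation of the $\USS$ definition into the language of the block sequence. The only real subtlety I expect is the index bookkeeping --- keeping straight that $\alpha_{n+1}$ (not $\alpha_n$) is the $A$-block separating $I_n$ from $I_{n+1}$, and handling the harmless boundary case $m = 1$ where $I_{m-1}$ does not exist.
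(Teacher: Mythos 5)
Your proof is correct, and it takes essentially the same approach as the paper: both translate the $\USS$ condition on $\N \setminus A$ into statements about the block sequence $(\alpha_n,\beta_n)$ via Proposition~\ref{prop:correspondence}. The paper compresses this into a single quantifier-juggling paragraph, asserting without comment the characterization ``$\N \setminus A$ is $D$-$\USS$ iff for every $L$ there is $n$ with $\alpha_{n+j}<D$ for all $0\le j<L$ or some $\beta_n\ge L$''; your explicit geometric observation (the gap between $I_n$ and $I_{n+1}$ is exactly $\alpha_{n+1}+1$, so a difference-$D$ sub-AP can only span blocks whose separating $\alpha$'s are $<D$) is precisely the unstated reason that equivalence holds, so your write-up is a slightly more detailed version of the same argument.
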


\begin{proof}
The set $\N \setminus A$ is $D$-$\USS$ if and only if for every $L$ there is either an $n$ such that $\alpha_{n+j} < D$ for all $0 \leq j < L$, or an $n$ such that $\beta_n \geq L$. Thus, $\N \setminus A$ is not $\USS$ if and only if for every $D$ there is some $L$ such that for all $n$  one can find $0 \leq j < L$ with $\alpha_{n+j} \geq D$, and for all $n$ we have $\beta_n < L$. This is equivalent to the statement that for every $D$ there exists $L$ such that $\sup_n \beta_n < L$ and for all $n$ there exists $0 \leq j < L$ such that $\alpha_{n+j} \geq D$.
\end{proof}

\begin{thm}\label{thm:CSinZ}
Let $A \subset \Z$. Then $A$ is $\CS$ if and only if $\Z \setminus A$ is not $\USS$. 
\end{thm}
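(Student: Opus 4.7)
The plan is to prove the two directions of the equivalence separately.

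\textbf{First direction} ($\Z\setminus A$ is $\USS$ $\Rightarrow$ $A$ is not $\CS$). Suppose $\Z\setminus A$ is $D$-$\USS$, and set $T := \{0, 1, \ldots, D-1\}$, a set of size $D$. For any finite $F \subset \Z$, choose a sub-arithmetic progression $q_1 < q_2 < \cdots < q_L$ in $\Z\setminus A$ with $q_{i+1} - q_i \leq D$ and $L$ larger than the diameter of $F$ plus $D$. Then $A \cap [q_1, q_L]$ contains no interval of length $D$: any such interval would span some gap $q_{j+1} - q_j \leq D$ and therefore contain $q_j$ or $q_{j+1}$. Setting $I_T := \{h \in \Z : h + T \subset A\}$, it follows that $I_T \cap [q_1, q_L - D + 1] = \emptyset$. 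Since any $S = T + s$ is contained in some $f + A$ (with $f \in F$) if and only if $s \in F + I_T$, choosing $s$ with $s - F$ lying inside the empty gap shows that $S$ is not covered by any translate $f + A$ with $f \in F$. Hence $A$ is not $D$-syndetic, and therefore not $\CS$.

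\textbf{Second direction} ($\Z\setminus A$ is not $\USS$ $\Rightarrow$ $A$ is $\CS$). Here I would invoke the Stone--\v{C}ech machinery from Section~2. Since $\Z$ is abelian, Theorem~\ref{thm:virt_abelian_not_CS-dcmpsbl} implies that $\beta\Z$ has a unique minimal closed right ideal $R$, which coincides with $K(\beta\Z)$. By Proposition~\ref{prop:CS and right ideals}, $A$ is $\CS$ iff $\overline{A}$ contains a right ideal, which by uniqueness is equivalent to $\overline{A} \supset K(\beta\Z)$. The task is therefore to translate this into the combinatorial condition ``$\Z\setminus A$ is not $\USS$.'' To bridge the two, I use the intermediate notion of \emph{thick syndeticity}: $A$ is thickly syndetic if $I_F := \bigcap_{f\in F}(A-f)$ is syndetic for every finite $F \subset \Z$. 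Unfolding definitions, $A$ is thickly syndetic iff for every finite $F$, the set $\Z \setminus I_F = \bigcup_{f \in F}\big((\Z\setminus A) - f\big)$ fails to be thick. Specializing to $F = \{0, 1, \ldots, D-1\}$, a direct check (essentially the converse of the reasoning in the first direction) shows this is equivalent to $\Z\setminus A$ not being $\USS$. The classical identification ``$\overline{A} \supset K(\beta\Z) \Leftrightarrow A$ is thickly syndetic'' from Stone--\v{C}ech semigroup theory then closes the loop.

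\textbf{The main obstacle} is the equivalence ``$\overline{A} \supset K(\beta\Z) \Leftrightarrow A$ is thickly syndetic,'' which, while standard, relies on nontrivial manipulation of the semigroup structure of $\beta\Z$. A more self-contained combinatorial route would try to construct the witness $F_n$ directly from the structural decomposition in Lemma~\ref{lem:CompNotUSS}, but I expect this to require a delicate pigeonhole or Ramsey-type argument to handle test patterns $T$ of unbounded diameter uniformly: knowing that each individual $I_T$ is syndetic does not immediately yield a single finite $F_n$ witnessing syndeticity of $I_T$ for all $T$ of size $n$ simultaneously.
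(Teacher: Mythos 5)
Your proposal is correct, but the two directions are of quite different character relative to the paper's own proof, so let me address them separately.

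\textbf{First direction.} Your argument that ``$\Z\setminus A$ is $\USS$ $\Rightarrow$ $A$ is not $\CS$'' is a clean, direct combinatorial proof, and it is essentially the contrapositive of the paper's ``only if'' direction. The paper instead assumes $A$ is $\CS$, works with the block encoding $(\alpha_n,\beta_n)$ from Proposition~\ref{prop:correspondence}, and shows that $D$-syndeticity forces $\alpha_{m+j}\geq D$ within bounded windows, then invokes Lemma~\ref{lem:CompNotUSS}. Your version avoids the block bookkeeping by arguing directly on a long sub-arithmetic progression in $\Z\setminus A$ of difference at most $D$: you correctly observe that inside the span of such a progression there is no run of $D$ consecutive elements of $A$, hence $I_T=\{h:h+T\subset A\}$ misses a window longer than the diameter of $F$, and therefore some translate of $T=\{0,\dots,D-1\}$ escapes every $f+A$ with $f\in F$. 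This is fine and at comparable difficulty to what the paper does.

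\textbf{Second direction.} Here your route is genuinely different. The paper proves ``$\Z\setminus A$ not $\USS$ $\Rightarrow$ $A$ is $\CS$'' by an explicit, self-contained induction: it constructs the finite sets $F_{n+1}$ directly from $F_n$ using the quantitative data $b=\sup\beta_n^{\pm}$ and $L(D)$ from Lemma~\ref{lem:CompNotUSS}, sliding a pattern to the start of a block of length $\geq D$ when needed. You instead invoke the Hindman--Strauss semigroup machinery: $A$ is thickly syndetic iff $\overline{A}\supset K(\beta\Z)$ (this is the standard ``PS$^*$''-characterization, Hindman--Strauss Theorem~4.40 together with taking complements of clopens in $\beta\Z$), which combined with Proposition~\ref{prop:CS and right ideals} gives $A$ is $\CS$. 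The bridge ``thickly syndetic $\Leftrightarrow$ $\Z\setminus A$ not $\USS$'' is indeed a direct check, as you say, once one observes (by translation-invariance and monotonicity of $I_F$) that it suffices to test $F=\{0,\dots,D-1\}$, and that thickness of $\bigcup_{f\in F}\bigl((\Z\setminus A)-f\bigr)$ is exactly $D$-$\USS$-ness of $\Z\setminus A$. Two small remarks: (i) you don't actually need Theorem~\ref{thm:virt_abelian_not_CS-dcmpsbl} or the uniqueness of the minimal closed right ideal for this implication --- $K(\beta\Z)\subset\overline{A}$ already implies $\overline{A}$ contains a right ideal (namely $K(\beta\Z)$ itself), so Proposition~\ref{prop:CS and right ideals} suffices; (ii) the unique minimal closed right ideal in $\beta\Z$ is $\overline{K(\beta\Z)}$ rather than $K(\beta\Z)$ itself (as in Proposition~\ref{prop:finite orbit}), though this does not affect your argument since $\overline{A}$ is closed.

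\textbf{What each approach buys.} Your route is shorter and conceptually tidy, but it outsources the real work to the (admittedly standard) Hindman--Strauss identification of $K(\beta\Z)$ with thick syndeticity, exactly the point you flag as the ``main obstacle.'' The paper's inductive argument, while more laborious, is elementary and self-contained, producing explicit witnesses $F_n$ with explicit size bounds; this is arguably in better keeping with a theorem whose statement is purely combinatorial, and it does not presuppose the $\beta\Z$ theory beyond what Section~2 develops. Your intuition that an elementary route would need a ``delicate pigeonhole'' argument to pass from syndeticity of each $I_T$ to a single finite $F_n$ is exactly right --- that is precisely what the paper's induction step accomplishes, using the bounded gap $b$ and the bounded search length $L(2M_n+1)$ to construct $F_{n+1}$ from $F_n$.
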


\begin{proof}
If $A$ is bounded above or below --- that is, if either $A^+:=A \cap \N$ or $A^-:=(-A) \cap \N$ is finite --- then $A$ is not $\CS$ (not even syndetic), and $\Z \setminus A$ is $1$-$\USS$.

On the other hand, if $\Z \setminus A$ is bounded both above {\em and} below --- that is, if $A$ is cofinite in $\Z$ --- then $A$ is $\CS$, and $\Z \setminus A$ is not $\USS$.

Since both $\CS$ and $\USS$ are invariant under translations, we may assume without loss of generality that $0 \in A$.

We may thus reduce to the case where either both $A^+$ and $A^-$ lie in $\Omega$, or one lies in $\Omega$ and the other is cofinite in $\N$.

Since these properties are also invariant under reflection (\ie under $A \mapsto -A$), we may further assume that $A^+ \in \Omega$ and that $A^-$ is either in $\Omega$ or cofinite in $\N$.

Finally, since both $\CS$ and $\USS$ are invariant under translations, we may assume that $A^+ \in \Omega$ and that $A^-$ is either in $\Omega$ or equals to all of $\mathbb N$.

{\bf ``Only if'' direction:}
Assume that $A \subset \Z$ is $\CS$.

Consider $A^+\in \Omega$, and let $(\alpha_n,\beta_n)_n \in \Psi$ be the corresponding sequence constructed in Proposition \ref{prop:correspondence}.
Since $A$ is $\CS$, it is in particular syndetic, so by Remark~\ref{remark:syndetic=>beta_is_bdd}, we have $\sup_n \beta_n < \infty$.

Now, we write $A^+ = \bigsqcup_{k=1}^\infty ([\ell_k, r_k) \cap \N)$
where $\ell_1=0$, $r_k = \ell_k + \alpha_k$, and $\ell_{k+1} = r_k + \beta_k$.

Let $D \in \N$. Since $A$ is $D$-syndetic, there is a finite set $F_{D}$ such that for every $z \in \Z$
there exists $f \in F_{D}$ with $\{ z+1+f, \ldots, z+D+f \} \subset A$.
Define $L(D) = \max \{ 2 |f| +1 \ : \ f \in F_{D} \}$, and set $J=\tfrac{L(D)-1}{2}$.

Let $m \in \N$.  
Consider $z = \ell_{m+J} \in A^+$. We know that $\{ z+1+f, \ldots , z+D+f \} \subset A^+$ for some $f \in F_{D}$.
Since $z+1+f \in A^+$, 
there must exist $|\rho| \leq |f| \leq J$ such that $z+1+f \in [\ell_{m+J+\rho} , r_{m+J+\rho} )$.
Since $\{ z+1+f, \ldots , z+D+f \} \subset A^+$, it follows that $r_{m+J+\rho} \geq 1 + z+D+f$ and $\ell_{m+J+\rho} \leq z+1+f$, 
which implies that 
$$ \alpha_{m+J+\rho} = r_{m+J+\rho}-\ell_{m+J+\rho} \geq D . $$
Thus, for each $m$ we have found some $0 \leq j: = J+\rho < L(D)$ for which $\alpha_{m+j} \geq D$.

By Lemma \ref{lem:CompNotUSS}, this shows that $\N \setminus A^+$ is not $\USS$.

If $A^-=\N$, then $\Z \setminus A=\N \setminus A^+$, and hence not $\USS$.

Otherwise, if $A^-=(-A) \cap \N \in \Omega$, then by applying the previous argument to $-A$ (which is $\CS$ as well), we conclude that $\N\setminus A^-$ is not $\USS$. It follows that
\[
\Z \setminus A = (\N \setminus A^+) \cup -(\N \setminus A^-)
\]
is also not $\USS$.

{\bf ``If'' direction:}
We now prove the other direction. Assume that $\Z \setminus A$ is not $\USS$, that is, both $\N \setminus A^+$ and $\N \setminus A^-$ are not $\USS$. We first assume that $A^-$ lies in $\Omega$, in addition to  $A^+$. 

We prove this by induction on $n$ that $A$ is $n$-syndetic.

Let $(\alpha_n^{\pm},\beta_n^{\pm})_n , \in \Psi$ be the sequences corresponding, by Proposition \ref{prop:correspondence}, to the sets $A^\pm$, respectively.
By Lemma \ref{lem:CompNotUSS}, $b:=\sup_n\{\beta_n^+,\beta_n^-\}<\infty$, so by Remark~\ref{remark:syndetic=>beta_is_bdd}, $A$ is $1$-syndetic, completing the base of the induction.

Now, assume that $A$ is $n$-syndetic.
We prove that $A$ is $(n+1)$-syndetic.

Our assumption is that 
there is some finite set $F_n$ so that for any set $\hat S$ of size $|\hat S|=n$
there exists some $f \in F_n$ for which $\hat S + f \subset A$.

Since $\N \setminus A^\pm$ are both not $\USS$,
for every $D$ there exists $L(D)$ such that for all $m$ there are some $i,j \leq L(D)$ with $\alpha_{m+j}^+ \geq D$
and $\alpha_{m+i}^- \geq D$.

Let $M_n = \max \{ |f| \ : \ f \in F_n \}$ and define
$$ F_{n+1} = \{ - (b+2M_n) (L(2M_n+1)+3) , \ldots, (b+2M_n) (L(2 M_n+1)+3) \} . $$
(Recall that $b = \sup_n \{ \beta_n^+ , \beta_n^- \}$.)

Let $S \subset \Z$ be of size $|S| = n+1$. We want to find some $f \in F_{n+1}$ such that $S+f \subset A$.

Since $F_{n+1} = -F_{n+1}$, and since $S+f \subset A$ if and only if $-S-f \subset -A$,
and since $A$ is $n$-syndetic if and only if $-A$ is $n$-syndetic, we may assume without 
loss of generality that $u : = \max S > 0$.

Set $\hat S = S \setminus \{u\}$, so that $|\hat S | = |\hat S + M_n| = n$.
Let $f \in F_n$ be such that $\hat S + M_n + f \subset A$.
Set $m := M_n+f$ and note that $0 \leq m \leq 2 M_n$ so that $m \in F_{n+1}$.

If $u+m \in A$ then $S+m \subset A$ and we are done because $m \in F_{n+1}$.

Otherwise, if $u+m \not\in A$, then $u+m \in \N \setminus A^+$. 
Writing $A^+ = \bigsqcup_{k=1}^\infty ([\ell_k, r_k) \cap \N)$
where $\ell_1=0$, $r_k = \ell_k + \alpha_k^+$, and $\ell_{k+1} = r_k + \beta_k^+$,
we find that $u+m \in [r_k,\ell_{k+1})$ for some $k \geq 1$.

Set $D := 2M_n+1$.
Now, because $\N \setminus A^+$ is not $\USS$, there exists a minimal $0 \leq j < L(D)$
such that $\alpha_{k+1+j}^+ \geq D$. 
This implies that 
\[
\begin{split}
\ell_{k+1+j} - (u+m) 
&\leq  \ell_{k+1+j} - r_k \\
&\leq \beta_k^+ + (\alpha_{k+1}^+ +\beta_{k+1}^+) + \cdots + (\alpha_{k+j}^+ +\beta_{k+j}^+)\\ 
&\leq (j+1)b+j(D-1)\\
&\leq (j+1)(2M_n+b)  .
\end{split}
\]  
Thus, $\ell_{k+1+j} + M_n=u+m'$ for some $0 \leq m' \leq m + (2M_n+b)(j+1) + M_n \leq (2M_n+b) (L(D)+2)$. As $\alpha_{k+1+i}^+\geq D$ we have that $u+m' = \ell_{k+1+j} + M_n$ is in $A$.  

Now, since $|\hat S + m'|= n$, there is some $f' \in F_n$ such that $\hat S + m'+ f' \subset A$.
However, since $|f'| \leq M_n$, we have that $$\ell_{k+1+j}=u+m'-M_n \leq u+m'+f' \leq \ell_{k+1+j} + 2M_n < r_{k+1+j},$$
implying that $u+m'+f' \in A$ as well.

We conclude that, in either case, either $S+m \subset A$, or that $S+m' + f' \subset A$.
Since $0 \leq m \leq 2 M_n$ and $|m'+f'| \leq (2M_n+b) (L(2M_n+1)+3)$, it follows that both $m \in F_{n+1}$ and $m'+f' \in F_{n+1}$.
This completes the induction step, proving that $A$ is $\CS$.

Finally, the case where $A^- = \N$ can be treated in the same way: simply repeat the previous argument using $b := \sup_n \{\beta_n^+\}$ in place of $\sup_n \{ \beta_n^+, \beta_n^- \}$, and omit any mention of $\alpha_n^-$ or $\beta_n^-$. Note also that if $u := \max S \leq 0$, then the conclusion follows immediately, and there is no need to appeal to the reflection symmetry $A \mapsto -A$ as we did earlier.
\end{proof}
Recall that $K(\beta \Z)$ denotes the unique smallest two-sided ideal in $\beta \Z$.
\begin{cor}\label{cor:ol_A_contains_K(beta Z)}
    Let $A \subset \Z$. Then $\overline{A} \supset K(\beta \Z)$ if and only if $\Z \setminus A$ is not $\USS$.
\end{cor}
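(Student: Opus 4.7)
The plan is to combine Theorem~\ref{thm:CSinZ} with the ideal-theoretic results from Section~2, translating the $\USS$ condition on the complement directly into a statement about closures in $\beta\Z$. Since $\Z$ is abelian (hence virtually abelian), Theorem~\ref{thm:virt_abelian_not_CS-dcmpsbl} ensures that $\beta\Z$ contains a unique minimal closed right ideal; call it $R_0$.

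The first step is to identify $R_0$ with $\overline{K(\beta\Z)}$. For every $x \in R_0$, the closed right ideal $r(x) = \overline{x\beta\Z}$ is contained in $R_0$ and therefore equals $R_0$ by minimality; hence $x \in M$, giving $R_0 \subseteq M$. Conversely, if $x \in M$ then $r(x)$ is a minimal closed right ideal, which by uniqueness must be $R_0$, so $x \in r(x) = R_0$, giving $M \subseteq R_0$. Thus $M = R_0$, and by Proposition~\ref{prop:finite orbit} we get $R_0 = \overline{M} = \overline{K(\beta\Z)}$.

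The corollary then follows from a chain of equivalences. Since $\overline{A}$ is closed, $\overline{A} \supseteq K(\beta\Z)$ if and only if $\overline{A} \supseteq \overline{K(\beta\Z)} = R_0$. Because $R_0$ is the unique minimal closed right ideal of $\beta\Z$, and because every right ideal of a compact right topological semigroup contains a minimal closed right ideal, the latter condition is equivalent to $\overline{A}$ containing some right ideal of $\beta\Z$. By Proposition~\ref{prop:CS and right ideals}, this in turn is equivalent to $A$ being $\CS$, and by Theorem~\ref{thm:CSinZ} to $\Z \setminus A$ not being $\USS$.

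The only nontrivial step is the identification $R_0 = \overline{K(\beta\Z)}$, and even this is immediate once one combines the uniqueness of the minimal closed right ideal with Proposition~\ref{prop:finite orbit}. Everything else is a routine assembling of results already proved in the paper, so no genuine obstacle is expected.
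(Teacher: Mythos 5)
Your proof is correct and takes essentially the same route as the paper: both combine Proposition~\ref{prop:CS and right ideals} (CS $\Leftrightarrow$ $\overline{A}$ contains a right ideal), Theorem~\ref{thm:virt_abelian_not_CS-dcmpsbl} (uniqueness of the minimal closed right ideal), and Theorem~\ref{thm:CSinZ} (CS $\Leftrightarrow$ complement not $\USS$). The only difference is that you spell out the identification of the unique minimal closed right ideal $R_0$ with $\overline{K(\beta\Z)}$ via $R_0 = M$ and Proposition~\ref{prop:finite orbit}, a step the paper's proof leaves implicit (it is alluded to only in the introduction); this is a reasonable clarification rather than a genuinely different argument.
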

\begin{proof} 
    By Proposition \ref{prop:CS and right ideals}, $A$ is $\CS$ if and only if $\overline{A}$ contains a right ideal of $\beta \Z$. So by Theorem \ref{thm:virt_abelian_not_CS-dcmpsbl}, $A$ is $\CS$ if and only if $\overline{A} \supset K(\beta \Z)$. Thus, by Theorem \ref{thm:CSinZ}, $\overline{A} \supset K(\beta \Z)$ if and only if $\Z \setminus A$ is not $\USS$ 
\end{proof}

We close this section with the a construction of a thick and syndetic set that is not $\CS$.
Recall that $A \subset G$ is \define{left (resp. right) 
thick} if for any finite set $S \subset G$ there exists some $g \in G$ such that $gS \subset A$ (resp. $Sg \subset A$). It is easy to observe that a set $A\subset G$ is syndetic ({\em i.e.}, 1-syndetic) if and only if its complement $G\setminus A$ is not right thick: this simply follows from the fact that $SA=G$ if and only if there is no $g \in G$ such that $S^{-1}g \subset G\setminus A$, for every $S,A \subset G$, where $S^{-1}=\{s^{-1}: s\in S\}$; see \cite[Remark 4.46]{HindmanStrauss}.

Note also that $\CS$ sets are automatically left thick: in fact, if $A \subset G$ is $n$-syndetic and $S\subset G$ with $|S| \leq n$ then, by definition, there is $g \in G$ such that $gS \subseteq A$. Also, obviously, $\CS$ sets are syndetic. It is therefore natural to ask whether syndetic and thick sets are automatically $\CS$.

As in syndeticity, if we say that a set is just \define{thick}, we mean that it is {\em left} thick. In any event, for abelian groups, right and left thickness obviously coincide. For subsets of integers, thick sets are clearly those that contain arbitrarily large ``chunks'' of consecutive integers. Recall also that
syndetic subsets of integers are those that have bounded gaps.

\begin{cor}\label{cor:syndethick_yet_non_CS}
There are thick and syndetic sets $A \subset \mathbb Z$ that are not $\CS$.
\end{cor}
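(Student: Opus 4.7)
The plan is to invoke Theorem~\ref{thm:CSinZ}: since a set $A \subset \Z$ is $\CS$ if and only if $\Z \setminus A$ is not $\USS$, it suffices to exhibit a syndetic and thick set $A \subset \Z$ whose complement is $\USS$. The basic tension to resolve is that thickness forces $A$ to contain arbitrarily long runs of consecutive integers, while $\USS$-ness of the complement requires $\Z \setminus A$ to contain arbitrarily long sub-arithmetic progressions of some uniformly bounded difference $D$. These two demands can be satisfied simultaneously by separating them spatially, putting the thick $A$-blocks in one region and the sub-AP witnesses for the complement in another.

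Concretely, I would build $A^+ \subset \N$ as a concatenation of pairs of ``phases.'' In phase $n$ (for $n = 1, 2, \ldots$), first lay down a striped segment of $2n$ consecutive integers in which membership in $A^+$ alternates singleton-by-singleton, starting with an element of $A^+$; then append a solid $A^+$-block of length $n$. In the language of Proposition~\ref{prop:correspondence}, the $(\alpha_k, \beta_k)$-sequence consists of blocks of $n$ pairs $(1,1)$ followed by a single pair $(n+1, 1)$ (the $+1$ comes from merging the final solid block with the first element of the next phase). Then define $A := A^+ \cup \{z \in \Z : z < 0\}$.

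The verification breaks into three routine steps. First, every maximal gap of $\Z \setminus A$ has length $1$, so by Remark~\ref{remark:syndetic=>beta_is_bdd}, $A$ is syndetic. Second, the solid $A$-blocks of length $n$ show that $A$ contains arbitrarily long runs of consecutive integers, hence $A$ is thick. Third, within the striped segment of phase $n$ the complement contains an arithmetic progression of common difference $2$ and length $n$, so $\Z \setminus A$ is $2$-$\USS$, hence $\USS$. By Theorem~\ref{thm:CSinZ}, $A$ is not $\CS$, completing the proof.

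There is no real obstacle here once one has Theorem~\ref{thm:CSinZ} in hand; the only point that requires a moment of care is making sure that the concatenation in the construction does not accidentally merge an $A$-gap across phase boundaries (which would spoil the uniform bound on $\beta_k$) or break the alternating pattern needed to realize a length-$n$ sub-AP of difference $2$ inside the complement. Writing the construction in terms of the pairs $(\alpha_k, \beta_k) \in \Psi$ via Proposition~\ref{prop:correspondence} makes this bookkeeping completely transparent.
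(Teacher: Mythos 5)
Your proposal is correct and takes essentially the same approach as the paper: exhibit an explicit set whose complement is $2$-$\USS$ (so the set fails to be $\CS$ by Theorem~\ref{thm:CSinZ}), yet whose complement has bounded block lengths (giving syndeticity of $A$) and unbounded gaps (giving thickness of $A$). The paper's concrete choice is $A = \Z \setminus (B \cup (-B))$ with $B = \{2^n+2k : n \in \N,\ 0 \le k \le n-1\}$ rather than your striped/solid concatenation, but the organizing idea is the same; the only small blemish is that Remark~\ref{remark:syndetic=>beta_is_bdd} as stated assumes $A^- \in \Omega$, whereas in your construction $A^- = \N$, though of course a set all of whose gaps are singletons is trivially syndetic.
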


\begin{proof}
Set $B=\{2^n+2k \ | \  n \in \mathbb N, ~0 \leq k \leq n-1 \}$ and let $A\subset \mathbb Z$ be the complement of $B \cup (-B) \subset \Z$. 
We show that $A$ is thick and syndetic but not $\CS$. 

By Theorem \ref{thm:CSinZ} $A$ is not $\CS$, as $B$ contains arbitrarily long arithmetic progressions of difference exactly $2$ (so specifically $B$ is $2$-$\USS$). 

The set $B \cup (-B)$ is obviously not syndetic since it does not have bounded gaps. 
Thus, $A$ is thick. 

In addition, since $B$ is contained in the even natural numbers, 
there is obviously no $z \in \mathbb Z$ such that $z + \{1,2\}$ is in $B \cup (-B)$. Thus, $B \cup (-B)$ is not thick. Hence,
$A$ is syndetic.
\end{proof}

\begin{remark}
One can in fact conclude that $A$ is not $\CS$ by showing that it is not $2$-syndetic
(although, as mentioned, it is thick and $1$-syndetic).

Indeed, let $F \subset \Z$ be any finite set, and let $r = \max \{ |f| \ : \ f \in F \}$. Then, choose $b = 2^{2r} + r$ and $a = b-1$.
Note that if $f \in F$ then $2^{2r} \leq b+f \leq 2^{2r}+2r$.
Therefore, if $b+f$ is even, then $b+f \in B$ and so $b+f \not\in A$.
Otherwise, if $b+f$ is odd, then $b+f > 2^{2r}$, so $2^{2r} \leq a+f \leq 2^{2r}+2r$ and $a+f$ is even, implying that $a+f \in B$ so $a+f \not\in A$.

We have shown that for any finite $F \subset \Z$ there exists a set $\{a,b\}$
such that $\{a,b\} + f \not\subset A$ for all $f \in F$.
That is, $A$ is not $2$-syndetic.
\end{remark}

\section{$\CS$ set with small density}

Since $\CS$ sets are in particular syndetic, they necessarily have positive density (since finitely many of their left translates cover the entire group). In this section, we show that positivity is the only constraint: $\CS$ sets can have an arbitrarily small positive density.

If $G$ is a group generated by a finite set $S$, the \define{word metric} $\rho_{(G,S)}$ on $G$ with respect to $S$ is the metric on $G$ associated with the Cayley graph of $G$ with respect to $S$. More formally, given $g,h \in G$, we define
\[
\rho_{(G,S)}(g,h)=\min\left\{  n \in \N ~:~ g^{-1}h=s_1\cdots s_n \text{ for some }   s_1,\dots,s_n \in S \cup S^{-1}\right\}.
\]
With respect to this metric, let $B_r$ denote the closed ball of radius $r$ around the identity element $e$, \ie the set of all $g \in G$ with $\rho_{(G,S)}(g,e) \leq r$.

The \define{density} $d_{(G,S)}$ of a subset $A\subset G$ with respect to $S$ is defined by

\[ 
d_{(G,S)}(A)=\limsup_{r \to \infty} \frac{ |A \cap B_r |}{|B_r| }.  
\]

We first focus on the group of integers $\Z$. In this context, when we say {\em density} we always mean the density with respect to the generating set $\{\pm 1 \}$ (or equivalently, with $\{ 1 \}$). That is, the density of a set $A \subset \Z$ is
\[
\limsup_{r \to \infty} \frac{ |A \cap [-r,r] |}{2r+1 }. 
\]

We use the characterization of Theorem \ref{thm:CSinZ} to produce a $\CS$ set in $\Z$ that has an arbitrarily small positive density.

First, we need an easy auxiliary lemma:

\begin{lem} \label{lem:recurrence}
Fix $T>1$ and $s \geq 0$.
Consider the recursion $r_{n+1} = T r_n + n+1+s$, with $r_0=0$.

Then, $r_n = a (T^n-1) + bn$ for
$$  a = \frac{s(T-1) + T}{(T-1)^2}
\AND b = - \frac{1}{T-1} . $$
\end{lem}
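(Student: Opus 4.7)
The plan is to prove this by straightforward induction on $n$, which is the most direct route for a first-order linear recurrence with a closed-form ansatz already provided. An alternative would be to derive the constants $a, b$ from scratch by finding a homogeneous plus particular solution, but since the statement hands us the formula, verification is cleaner.

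The base case $n=0$ is immediate: $a(T^0 - 1) + b \cdot 0 = 0 = r_0$. For the inductive step, I would assume $r_n = a(T^n - 1) + bn$, substitute into the recurrence $r_{n+1} = T r_n + n + 1 + s$, and expand to obtain
\[
r_{n+1} = a(T^{n+1} - T) + bTn + n + 1 + s.
\]
The goal is to show this equals $a(T^{n+1} - 1) + b(n+1)$. Subtracting, the equality reduces to the identity
\[
a(1 - T) + n\bigl(b(T-1) + 1\bigr) + (1 + s - b) = 0,
\]
which must hold for every $n$. Equating the coefficient of $n$ to zero forces $b = -1/(T-1)$, and the constant term then yields $a(T-1) = 1 + s - b = (s(T-1) + T)/(T-1)$, giving the stated value of $a$.

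There is no real obstacle here: the computation is routine algebra, and the only thing to be mildly careful about is keeping track of the signs when collecting the coefficients of $T$ and $n$ separately. I would present the verification compactly, perhaps in a single aligned display, and note that the formulas for $a$ and $b$ are uniquely determined by matching the $n$-coefficient and the constant coefficient, which also explains why the ansatz $a(T^n - 1) + bn$ is the right shape — the $-1$ inside the first term is chosen precisely to absorb the initial condition $r_0 = 0$.
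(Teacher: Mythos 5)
Your proof is correct, and it takes a genuinely different route from the paper's. The paper appeals to the general structure theorem for linear recurrences with constant coefficients (the homogeneous part contributes $aT^n$, the linear forcing term $n+1+s$ contributes a linear particular solution since $1$ is not a characteristic root), posits $r_n = aT^n + bn + c$, and then determines the three constants by solving the linear system coming from $r_0, r_1, r_2$. You instead verify the given closed form by induction, and observe that the inductive step holds if and only if the coefficient of $n$ and the constant term in the residual $a(1-T) + n\bigl(b(T-1)+1\bigr) + (1+s-b)$ both vanish, which forces exactly the stated $a$ and $b$. Your approach is more elementary and self-contained (no appeal to the theory of linear recurrences), and it also shows the constants are uniquely determined by the recursion rather than merely fitting three data points; the paper's approach is shorter to write once the structure theorem is taken for granted. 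Both are fine; there is no gap in yours.
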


\begin{proof}
Since $(r_n)_{n=0}^\infty$ is a linear recurrence with constant coefficients then there exists some constants $a,b,c$ such that $r_n = a T^n + b n + c$ for all $n$.
This gives rise to the equations: $a+c=r_0=0$ and 
$$ a (T-1) + b = r_1 = 1+s \qquad a (T^2-1) + 2 b = r_2 = T(1+s) + 2 +s $$
which leads to the solution.
\end{proof}

\begin{thm} \label{thm:small CS in Z}
For any $\eps>0$ there exists a $\CS$ set $A \subset \Z$ with density less than $\eps$.
\end{thm}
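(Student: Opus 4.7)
The plan uses the characterization of $\CS$ sets provided by Theorem~\ref{thm:CSinZ} together with Lemma~\ref{lem:CompNotUSS}: for each $\varepsilon>0$ I exhibit a symmetric set $A=A^+\cup(-A^+)\subset\Z$ with $A^+\in\Omega$, such that the sequence $(\alpha_n,\beta_n)\in\Psi$ associated to $A^+$ via Proposition~\ref{prop:correspondence} has bounded $\beta_n$ and satisfies the window property of Lemma~\ref{lem:CompNotUSS}, while the density of $A^+$ in $\N$ (which by symmetry agrees with the density of $A$ in $\Z$) is less than $\varepsilon$.

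Concretely, I fix an integer $T\ge 2$ (say $T=2$) and a large integer $b$, to be chosen in terms of $\varepsilon$ and $T$ at the end, and define
\[
\beta_n:=b, \qquad \alpha_n:=v_T(n)+1,
\]
where $v_T(n)$ denotes the $T$-adic valuation of $n$ (the largest $k\ge 0$ with $T^k\mid n$). Let $A^+\in\Omega$ be the set associated with this sequence, and set $A:=A^+\cup(-A^+)$. Then $\sup_n\beta_n=b<\infty$, and $\{n\ge 1: \alpha_n\ge D\}=\{kT^{D-1}:k\ge 1\}$ is an arithmetic progression of common difference $T^{D-1}$, so any $L(D):=T^{D-1}$ consecutive block-indices contain at least one such $n$. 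Lemma~\ref{lem:CompNotUSS} then yields that $\N\setminus A^+$ is not $\USS$; by the symmetry of $A$, $\Z\setminus A$ is not $\USS$ either, and Theorem~\ref{thm:CSinZ} shows that $A$ is $\CS$.

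It remains to estimate the density. Direct counting gives $\sum_{n=1}^{T^K}\alpha_n = T^K+\sum_{n=1}^{T^K}v_T(n) = \frac{T^{K+1}-1}{T-1}$ and $\sum_{n=1}^{T^K}\beta_n = bT^K$, so the positions $p_K:=\sum_{n=1}^{T^K}(\alpha_n+\beta_n)$ satisfy the linear recurrence $p_{K+1}=Tp_K+1$ (a constant-coefficient instance of the recurrence treated in Lemma~\ref{lem:recurrence}) and grow as a constant multiple of $T^K$. Consequently $|A^+\cap[0,p_K]|/p_K$ converges to $\frac{T}{T+b(T-1)}$; choosing $b$ so that $\frac{T}{T+b(T-1)}<\varepsilon$ completes the argument. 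The one delicate point is to verify this is actually the full $\limsup$ density as $r$ ranges over all of $\N$, not merely a subsequential value. This reduces to the observation that each individual $\alpha_n$ is bounded by $\log_T n + 1$ while $p_n$ grows linearly in $n$, so the running ratio cannot overshoot its subsequential limit by any fixed positive amount; sandwiching between consecutive $p_K$'s then gives the full limit.
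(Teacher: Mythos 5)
Your proof is correct and takes a genuinely different, cleaner route than the paper. The skeleton matches: build $A=A^+\cup(-A^+)$ from a pair $(\alpha_n,\beta_n)_n\in\Psi$ with constant $\beta_n=b$, verify via Lemma~\ref{lem:CompNotUSS} that $\N\setminus A^+$ is not $\USS$, invoke Theorem~\ref{thm:CSinZ}, and estimate density. Where you diverge is the choice of $\alpha_n$. The paper builds $\alpha_n$ by concatenating the rows of a recursively defined triangular array $\gamma_{n,k}$ and obtains the density through the row-sum recurrences of Lemma~\ref{lem:recurrence}. Your choice $\alpha_n=v_T(n)+1$ (with $v_T$ the $T$-adic valuation) is substantially more transparent: $\{n:\alpha_n\geq D\}$ is literally the progression of multiples of $T^{D-1}$, so Lemma~\ref{lem:CompNotUSS} applies with $L(D)=T^{D-1}$ and no bookkeeping, and $\sum_{n\leq T^K}\alpha_n=\tfrac{T^{K+1}-1}{T-1}$ is a one-line geometric-series count. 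Both constructions drive the density to $0$ by enlarging the constant gap.

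One step worth tightening is the passage from the subsequential limit to the full $\limsup$. You compute the ratio along the exponentially-spaced positions $p_K=\sum_{n\leq T^K}(\alpha_n+\beta_n)$ and then appeal to $\alpha_n\leq\log_T n+1$ together with ``sandwiching between consecutive $p_K$'s''; but since $p_{K+1}/p_K\to T>1$, naive sandwiching at that scale leaves a factor of $T$. The clean way to use your logarithmic bound is to estimate at \emph{every} block end: from $\sum_{n\leq m}v_T(n)=\sum_{j\geq1}\lfloor m/T^j\rfloor\in\bigl[\tfrac{m}{T-1}-\log_T m,\ \tfrac{m}{T-1}\bigr]$ one gets $\sum_{n\leq m}\alpha_n=m\cdot\tfrac{T}{T-1}+O(\log m)$ for all $m$, so the ratio at the block-end position $\sum_{n\leq m}(\alpha_n+\beta_n)=\sum_{n\leq m}\alpha_n+mb$ converges to $\tfrac{T}{T+b(T-1)}$, and within the $(m+1)$-th block the running ratio can exceed the block-end value by at most $\alpha_{m+1}$ divided by that position, which is $O(\log m/m)\to 0$. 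This is precisely what your logarithmic bound is for; the repair is one of exposition, not substance, and the rest of the argument stands.
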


\begin{proof}
Let $K>0$ and $M>1$ be positive integers.
Set $\beta_n = d$ for all $n \geq 1$.
Define inductively: $\gamma_{1,1} = 1$, $r_1=1$, $r_{n+1} = M r_n +1$, $\gamma_{n+1,r_{n+1}} = n+1$,
and for $1 \leq k \leq M r_n$ define 
$$ \gamma_{n+1,k} = \gamma_{n , \ell(k)} \qquad \ell(k) = (k-1 \pmod{r_n}) + 1 $$

Now, define $(\alpha_n)_n$ by concatenating the ``rows'' of the triangular array $\gamma$.
That is, set $R_0=0$ and $R_m = \sum_{k=1}^m r_k$. Then, for $n \geq 1$, 
set $B(n) = m$ for any $m$ such that $R_{m} < n \leq R_{m+1}$.
Then, set $C(n) := n-R_{B(n)}$,
so that $1 \leq C(n) \leq r_{m+1}$ for $m=B(n)$.
Define 
$$ \alpha_n = \gamma_{B(n)+1 ,  C(n) } . $$

\eg if $M=2$ we have that the first values of $\gamma_{n,k}$ are:
$$ 
\begin{matrix}
1 & &   &   &   &   &  &  \\
112 &  &  &   &   &  &  \\
112 & 112 & 3 &  &  &  &  \\
112 & 112 & 3 & 112 & 112 & 3 & 4 
\end{matrix}
$$
and the sequence $\alpha$ starts out:
$$ 1\ 112\ 1121123\ 112112311211234\ \ldots $$

Let $A^+ \in \Omega$ be defined by the sequence $(\alpha_n,\beta_n)_n \in \Psi$.

For any $D$ and $n > R_D$ we have that $\alpha_{n+k} = \gamma_{B(n+k)+1, C(n+k)}=\gamma_{B(n)+1, C(n)+k} > D$ for some $k \leq r_{D+1}$.  This implies that $\N \setminus A^+$ is not $\USS$.
Hence, by taking $A = A^+ \cup -A^+$ we get that $A$ is $\CS$ by Theorem \ref{thm:CSinZ}.

We now show that $A$ has small density.

The recursion $r_{n+1} = M r_n + 1$ with $r_1=1$ is solved by $r_n = \frac{M^n-1}{M-1}$, so that
\begin{align*}
R_m & = \frac{1}{M-1} \sum_{k=1}^m (M^k - 1) = - \frac{m}{M-1} + \frac{M(M^m-1)}{(M-1)^2} .
\end{align*}
Also, denote $\Gamma_n := \sum_{k=1}^{r_n} \gamma_{n,k}$ (the sum over the $n$-th ``row'' of $\gamma$).
We have the recurrence $\Gamma_{n+1} = M \cdot \Gamma_n + n+1$, with $\Gamma_1=1$, 
so this recurrence is solved by
$$ \Gamma_n = \frac{ M^{n+1} - M  - (M-1) n }{ (M-1)^2} $$
as in Lemma \ref{lem:recurrence}.
Summing this over $n$ we have
\begin{align*}
\sum_{k=1}^{R_m} \alpha_k & = \sum_{n=1}^m \Gamma_n \leq  \frac{1}{(M-1)^2} \sum_{n=1}^m M^{n+1} 
\leq \frac{M^{m+2} }{ (M-1)^3 } .
\end{align*}
We conclude that as $m \to \infty$,
$$ 1 \leq  \frac{1}{R_m} \sum_{n=1}^{R_m} \alpha_n \leq \frac{ M^{m+2} }{ M (M-1) (M^{m} - 1) - m (M-1)^2 } \to 1 . $$
Hence, the density of $A$ is given by
$$ \lim_{m \to \infty} \frac{ \sum_{n=1}^{R_m} \alpha_n }{ \sum_{n=1}^{R_m} \left(\alpha_n+\beta_n\right) }
= \lim_{m \to \infty} \left( 1 + K \cdot \frac{R_m}{\sum_{n=1}^{R_m} \alpha_n } \right)^{-1} = \frac{1}{K+1} .
$$
As $K$ is arbitrary, this can be made as small as desired.
\end{proof}

\begin{remark}\label{rem:Z^d}
 We can directly generalize Theorem~\ref{thm:small CS in Z} to $\mathbb{Z}^d$ for any $0<d \in \N$. Let $\varepsilon>0$ and let $A \subset \mathbb{Z}$ be a $\CS$ set with density less than $\sqrt[d]{\varepsilon}$. Then it is simple to check that $A^d \subset \Z^d$ is $\CS$ as well, and its density is less than $\varepsilon$.
\end{remark}

In Theorem \ref{thm:small CS in any group} we show that Remark~\ref{rem:Z^d} can be generalized to any finitely generated discrete infinite group. We start with a few lemmata.

\begin{lem} \label{lem:CS from quotient}
Let $\vphi : G \to H$ be a surjective homomorphism.

If $A \subset H$ is $\CS$ in $H$, then $\vphi^{-1}(A) \subset G$ is $\CS$ in $G$.
\end{lem}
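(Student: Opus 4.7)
The plan is to verify the key algebraic identity that left translates of preimages are preimages of left translates, and then pull back the witnesses $F_n$ for the $\CS$ property of $A$ along a fixed set of preimages.

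First, I will observe that for any $g \in G$, we have the equality
\[
g \cdot \vphi^{-1}(A) = \vphi^{-1}\bigl(\vphi(g) \cdot A\bigr).
\]
Indeed, $x \in g \vphi^{-1}(A)$ iff $g^{-1} x \in \vphi^{-1}(A)$ iff $\vphi(g)^{-1} \vphi(x) = \vphi(g^{-1} x) \in A$ iff $\vphi(x) \in \vphi(g) A$ iff $x \in \vphi^{-1}(\vphi(g) A)$; here I am using that $\vphi$ is a homomorphism.

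Next, fix $n \in \N$, and let $F_n \subset H$ be a finite set witnessing the $n$-syndeticity of $A$ in $H$. Since $\vphi$ is surjective, I can choose a finite set $F'_n \subset G$ consisting of one preimage $f'$ of each $f \in F_n$ under $\vphi$ (so $|F'_n| = |F_n|$ and $\vphi(F'_n) = F_n$). I claim that $F'_n$ witnesses the $n$-syndeticity of $\vphi^{-1}(A)$ in $G$. Given any $S' \subset G$ of size at most $n$, set $S := \vphi(S') \subset H$; then $|S| \leq n$, so by hypothesis there is some $f \in F_n$ with $S \subset fA$. Let $f' \in F'_n$ be the chosen preimage of $f$. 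Then for every $s' \in S'$, $\vphi(s') \in S \subset fA = \vphi(f') A$, which by the identity above means $s' \in f' \vphi^{-1}(A)$. Hence $S' \subset f' \vphi^{-1}(A)$, as required.

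There is no real obstacle here: the content is just the compatibility of left translation with the homomorphism, and the fact that a finite set in $H$ lifts (non-canonically) to a finite set of equal cardinality in $G$ because $\vphi$ is surjective. Since this works for every $n$, $\vphi^{-1}(A)$ is $\CS$ in $G$.
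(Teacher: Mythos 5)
Your argument is correct and is essentially the same as the paper's: both lift the witnessing finite set $F_n$ pointwise along the surjection $\vphi$ and push a size-$\leq n$ set in $G$ down to $H$ to find the right translate. The only cosmetic difference is that you isolate the identity $g\,\vphi^{-1}(A) = \vphi^{-1}(\vphi(g)A)$ explicitly, whereas the paper uses it implicitly via the equivalence $\vphi(\tilde f^{-1} T) \subset A \iff T \subset \tilde f\,\vphi^{-1}(A)$.
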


\begin{proof}
For each $n$ let $F_n \subset H$ be a finite set such that for any $S \subset H$ of size at most $n$
there exists $f \in F_n$ such that $S \subset f A$.

For each $n$ and $f \in F_n$ choose some $\tilde f \in G$ such that $\vphi(\tilde f) = f$.
Let $\tilde F_n = \{ \tilde f \ : \ f \in F_n \}$.  So $\tilde F_n$ is a finite set.

Let $T \subset G$ be a subset of size at most $n$.
Consider $\vphi(T) \subset H$.  Since $|\vphi(T)| \leq n$, there exists $f \in F_n$ such that $\vphi(T) \subset f A$.
Thus, $\vphi( \tilde f^{-1} T ) \subset A$, which implies that $T \subset \tilde f \vphi^{-1}(A)$.
\end{proof}

Recall that we say that a group $G$ with a finite generating set $S$ is said to have \define{polynomial growth} if there exists some polynomial $p(\cdot)$ such that for every $n$ we have that $|\{g \in G \colon \rho_{(G,S)}(g,e) \leq n  \}| \leq p(n)$, where $\rho_{(G,S)}(g,e)$ is the distance of $g$ to the identity element $e$ in the word metric $\rho_{(G,S)}$ with respect to $S$. 
It is well known (and easy to prove) that the notion of polynomial growth does not depend on the choice of specific generating set $S$.
A famous theorem by Gromov states that a finitely generated group has polynomial growth if and only if it is virtually nilpotent \cite{Gromov}.

\begin{lem}
\label{lem:density}
Let $G$ be a finitely generated group of polynomial growth,
and let $\vphi : G \to \Z$ be a surjective group homomorphism. 

Fix a finite generating set $S \subset G$.

Then, there is $C>0$ such that $d_{(G,S)}(\vphi^{-1}(A)) \leq C d_{(\Z,\{\pm 1\})}(A)$ for any $A \subset \Z$.
\end{lem}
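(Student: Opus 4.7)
The plan is to decompose $\varphi^{-1}(A) \cap B_r$ along the fibers of $\varphi$, bound each individual fiber, and then sum over the $O(r)$ fibers that meet $B_r$. To set up, write $L := \max\{|\varphi(s)| : s \in S \cup S^{-1}\}$, so $\varphi$ is $L$-Lipschitz with respect to the word metric; in particular $\varphi(B_r) \subseteq [-Lr, Lr]$. By surjectivity, choose $t \in G$ with $\varphi(t) = 1$, and let $k := \rho_{(G,S)}(t,e)$. Denote $f_r(n) := |\varphi^{-1}(n) \cap B_r|$, so that $|\varphi^{-1}(A) \cap B_r| = \sum_{n \in A \cap [-Lr, Lr]} f_r(n)$.

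The heart of the argument is the claim that there is a constant $C_1$ with $f_r(n) \leq C_1 |B_r|/r$ for every $n \in \Z$ and every sufficiently large $r$. To establish this, I would exploit that left multiplication by $t^{-j}$ is an injection sending $\varphi^{-1}(n) \cap B_r$ into $\varphi^{-1}(n-j) \cap B_{r+jk} \subseteq \varphi^{-1}(n-j) \cap B_{(k+1)r}$. The target fibers are pairwise disjoint for $j = 0, 1, \ldots, r-1$ (they are distinct cosets of $\ker \varphi$), so summing yields $r \cdot f_r(n) \leq \sum_{j=0}^{r-1} |\varphi^{-1}(n-j) \cap B_{(k+1)r}| \leq |B_{(k+1)r}|$. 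The final ingredient is that $|B_{(k+1)r}|/|B_r|$ is uniformly bounded for large $r$, which I would derive from the polynomial growth assumption via the Bass--Guivarc'h formula: combined with Gromov's theorem, this gives $|B_r| \sim c r^d$ for some integer $d$, and hence $|B_{(k+1)r}| \leq C_0 |B_r|$ for a constant $C_0 = C_0(k)$.

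Once the claim is in hand, summing the fiber bounds gives $|\varphi^{-1}(A) \cap B_r|/|B_r| \leq C_1 \cdot |A \cap [-Lr, Lr]|/r = 2LC_1 \cdot |A \cap [-Lr, Lr]|/(2Lr)$. Taking $\limsup_{r \to \infty}$ and observing that $|A \cap [-Lr, Lr]|/(2Lr)$ differs from $|A \cap [-R, R]|/(2R+1)$ along the subsequence $R = Lr$ by a factor tending to $1$, and that this latter lim sup along a subsequence is at most $d_{(\Z, \{\pm 1\})}(A)$, I conclude $d_{(G,S)}(\varphi^{-1}(A)) \leq 2LC_1 \cdot d_{(\Z,\{\pm 1\})}(A)$. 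So the constant $C := 2LC_1$ works.

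The main obstacle is the fiber bound $f_r(n) = O(|B_r|/r)$, which captures the intuition that level sets of $\varphi$ are ``codimension one''. The translation-by-$t$ argument that reduces this to a comparison between $|B_{(k+1)r}|$ and $|B_r|$ is robust, but the doubling-type inequality it relies on is where polynomial growth is used in a genuinely strong way: the polynomial \emph{upper} bound $|B_r| \leq P(r)$ alone is insufficient --- one also needs a matching \emph{lower} bound $|B_r| \gtrsim r^d$, supplied by the Bass--Guivarc'h--Pansu asymptotic for virtually nilpotent groups. This is what makes the argument non-elementary at its core.
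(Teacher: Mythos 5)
Your proof is correct and follows essentially the same route as the paper: both arguments translate a single fiber $\varphi^{-1}(n)\cap B_r$ by powers of a fixed preimage of $1$ into $\Omega(r)$ pairwise disjoint fibers inside a dilated ball $B_{Cr}$ (you use left multiplication by $t^{-j}$; the paper uses right multiplication by $a^z$, but the spreading idea is identical), then invoke the Lipschitz property of $\varphi$ to restrict to $O(r)$ relevant fibers and the Bass--Guivarc'h asymptotic to bound $|B_{Cr}|/|B_r|$. Your closing remark that a polynomial \emph{upper} bound alone would not give the doubling inequality, and that the matching lower bound from Bass--Guivarc'h--Pansu is genuinely needed, is a correct and worthwhile observation that matches the paper's citation of Bass and Guivarc'h at precisely that step.
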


\begin{proof}
Let $K :=  \mathsf{Ker}  \vphi$ and fix an $a \in G$ such that $\vphi(a) = 1$. Since $G/K \cong \Z$, every $g \in G$ can be written uniquely as $g = k a^z$ for some $k \in K$, $z \in \Z$.

Consider some generating set of $G$.
Let $B_r$ denote the ball of radius $r$ around the identity element in $G$ in the corresponding metric.
We use the notation $|g|$ to denote the distance of $g$ to the identity element $e$ of $G$.
So the distance between $g,h$ is given by $|g^{-1} h|$.
Set $M = |a|$.

Now, if $g \in  B_r$ then $g a^z \in B_{r+M|z|}$.
So, for any fixed $w \in \Z$, the map $(ka^w,z) \mapsto k a^{w+z}$, from $K a^w \times \Z$ to $G$, is injective, and maps 
$(Ka^w \cap B_r) \times ([-r,r] \cap \Z)$ into $B_{(M+1)r}$. This implies that
$|B_{(M+1)r}| \geq (2r+1) |K a^w \cap B_r |$ for any $w \in \Z$.

Also, if $k a^z \in B_r$, then since $\vphi$ is Lipschitz (as any group homomorphism is a Lipschitz map between any two Cayley graphs), 
we have that $|z| = |\vphi(ka^z)| \leq Lr$, where $L$ is the Lipschitz constant of $\vphi$.
Specifically, if $k a^z \in \vphi^{-1} (A) \cap B_r$ then $z \in A \cap [-Lr,Lr]$.

Thus, we have that
\begin{align*}
\frac{ |\vphi^{-1} (A) \cap B_r | }{ |B_r| } & = \sum_{w \in A \cap [-Lr,Lr] } \frac{ |K a^w \cap B_r| }{ |B_r| }
\leq \frac{ | A \cap [-Lr,Lr] | }{2r+1} \cdot \frac{ |B_{(M+1)r}| }{ |B_r | } .
\end{align*}
Since $G$ has polynomial growth, there is a some $C>0$, dependent only on $M$, such that $\limsup_{r \to \infty} \frac{|B_{(M+1)r}|}{ |B_r|} \leq C < \infty$
(this is well known and follows from \cite{Bass, Guivarch}), so 
$d_G(\vphi^{-1}(A)) \leq C L d_{\Z}(A)$.
\end{proof}

\begin{thm}\label{thm:small CS in any group}
Let $G$ be an infinite finitely generated group,
and fix some finite generating set $S$ of $G$.

For any $\eps>0$ there exists a $\CS$ set $A \subset G$ with density less than $\eps$.
\end{thm}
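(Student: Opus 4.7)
\emph{Plan.} I would split the argument by the growth type of $G$. By Gromov's theorem, either $G$ has polynomial growth (equivalently, $G$ is virtually nilpotent) or it does not. In the polynomial-growth case I reduce to $\Z$ using Lemma~\ref{lem:CS from quotient} together with Lemma~\ref{lem:density}; in the non-polynomial-growth case, $G$ fails to be strongly amenable (by the characterization noted in Remark~\ref{rem:every_group_is_syndetic_decomposable}) and therefore contains infinitely many pairwise disjoint $\CS$ sets by \cite{KenRaumSal22}, which I would exploit to extract a set of small density.

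\emph{Polynomial growth case.} First I would pick a normal finite-index nilpotent subgroup $N\trianglelefteq G$ (take the normal core of any finite-index nilpotent subgroup). Since $N$ is infinite, finitely generated, and nilpotent, its abelianization $N/[N,N]$ is infinite (if $N^{ab}$ were finite, then by an easy induction each lower-central quotient $\gamma_i/\gamma_{i+1}$ is finite, making $N$ finite), so there is a surjection $\psi\colon N\twoheadrightarrow\Z$. For $\eps'>0$ to be fixed later, Theorem~\ref{thm:small CS in Z} gives $B\subset\Z$ which is $\CS$ with $d_\Z(B)<\eps'$; then $\tilde B:=\psi^{-1}(B)\subset N$ is $\CS$ in $N$ by Lemma~\ref{lem:CS from quotient}, and Lemma~\ref{lem:density} (applicable since $N$ has polynomial growth as a finite-index subgroup of $G$) bounds $d_N(\tilde B)\leq C_1\eps'$.

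To transport from $N$ to $G$, I take right-coset representatives $q_1,\dots,q_m$ of $N$ in $G$ and set $A:=\bigsqcup_{i=1}^m \tilde B\, q_i$. Mimicking the argument in the proof of Proposition~\ref{prop:finite index}: Proposition~\ref{prop:CS and right ideals} yields some $x\in\overline N\subset\beta G$ with $r_N(x)\subset\overline{\tilde B}$, and then $\overline A\supset\bigsqcup_i r_N(x)\, q_i = r(x)$, a closed right ideal of $\beta G$, so $A$ is $\CS$ in $G$ by Proposition~\ref{prop:CS and right ideals} again. For the density, each $\tilde B q_i$ lies in the coset $N q_i$, and since the word metric of $N$ inherited from $G$ is bi-Lipschitz equivalent to $N$'s own word metric and the growth is polynomial, one obtains $d_G(\tilde B q_i)\leq C_2\, d_N(\tilde B)$; summing over $i$ via the subadditivity of upper density yields $d_G(A)\leq m\,C_1 C_2\,\eps'$, which is less than $\eps$ for a suitable choice of $\eps'$.

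\emph{Non-polynomial growth case and the main obstacle.} Let $A_1,A_2,\ldots$ be pairwise disjoint $\CS$ sets in $G$. Disjointness gives $\sum_{i=1}^N |A_i\cap B_r^G|/|B_r^G|\leq 1$ for every $r$, and a pigeonhole argument then forces at least one $A_i$ with $i\leq N$ to satisfy $\liminf_r|A_i\cap B_r^G|/|B_r^G|\leq 1/N$. The \textbf{main obstacle} is that this only controls \emph{lower} density ($\liminf$), whereas the theorem is stated in terms of upper density ($\limsup$). To bridge the gap, I would pursue one of two routes: (i) arrange the $A_i$'s to be translates of a single $\CS$ set (so their upper densities are comparable and summing forces each to be small); or (ii) work in $\beta G$ and construct $A$ directly from a minimal idempotent $x\in K(\beta G)$, choosing $A\subset G$ to belong to every ultrafilter in $r(x)$ while keeping $|A\cap B_r^G|/|B_r^G|$ small by iterating the Kennedy--Raum--Salomon construction on nested $\CS$ sets. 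Making either route rigorous, and controlling the interaction between the upper density and the $\beta G$-structure, is the hard part.
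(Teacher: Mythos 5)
Your polynomial-growth branch is essentially the paper's proof. The paper separates the reduction into two lemmas that you combine: for nilpotent $G$ one surjects $G\twoheadrightarrow\Z$ via the abelianization, pulls back a small-density $\CS$ set by Lemma~\ref{lem:CS from quotient}, and controls density by Lemma~\ref{lem:density}; then the passage from a finite-index subgroup $H$ to $G$ is done by setting $B=\bigsqcup_{q\in R}Aq$. Two small differences: the paper verifies directly and combinatorially that $B$ is $\CS$ in $G$ (decompose an $n$-set $T\subset G$ as $\bigsqcup_r S_r r$, apply the covering family $F_n$ of $A$ in $H$ to $S=\bigcup_r S_r$), whereas you invoke the $\beta G$-machinery from Proposition~\ref{prop:finite index}; both are fine, the paper's being a bit more self-contained. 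Also, you do not need the normal core: any finite-index nilpotent subgroup $H\le G$ works, since the finite-index transfer step never uses normality.

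On the non-polynomial-growth branch, your flagged obstacle is genuine, and in fact it is a point the paper's own proof treats quite tersely. The paper simply asserts that among a pairwise disjoint family $(A_n)_n$ of $\CS$ sets ``some must have arbitrarily small density.'' As you observe, the pigeonhole/superadditivity argument
\[
\sum_{n=1}^{N}\liminf_{r}\frac{|A_n\cap B_r|}{|B_r|}\le\liminf_{r}\sum_{n=1}^N\frac{|A_n\cap B_r|}{|B_r|}\le 1
\]
only controls $\liminf$, whereas the density in the statement is $\limsup$, which is merely \emph{subadditive} and so gives no upper bound on $\sum_n\limsup_r|A_n\cap B_r|/|B_r|$. (A generic array $a_{n,r}\ge0$ with $\sum_n a_{n,r}\le 1$ for each $r$ can have $\limsup_r a_{n,r}=1$ for every $n$, e.g.\ assigning $r$ to $n$ by its $2$-adic valuation.) The paper's closing remark also acknowledges a version of this issue for general F\o lner sequences. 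So you have correctly located where extra input is needed, but your proposal does not supply it: route (i) founders because upper density with respect to balls is not translation-commensurable in a group of exponential growth (the symmetric difference $|B_r\,\triangle\, gB_r|$ is a positive proportion of $|B_r|$), and route (ii) is not carried far enough to evaluate. As submitted, the non-polynomial-growth case is therefore incomplete; note, however, that this is a place where the paper's own justification is also very thin, so identifying the $\limsup$/$\liminf$ mismatch is a useful observation rather than a mere omission on your part.
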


\begin{proof}
If $G$ is finitely generated and not virtually nilpotent, then by \cite{FriTamVa18} it is not strongly amenable. Thus, it follows from \cite{KenRaumSal22} that there exists a sequence
$(A_n)_n$ of pairwise disjoint $\CS$ sets, some of which must have arbitrarily small density.

Now, assume that $[G:H] < \infty$, and that $A \subset H$ is $\CS$ (in $H$) with density less than $\eps$.
Let $G = \bigsqcup_{r \in R} Hr$ for some set of representatives $R$ of the right cosets (in particular, $|R| = [G:H]$), and define $B = \bigsqcup_{r \in R} Ar$.
It is simple to check that the density of $B$ in $G$ is less than $C \eps$, for some constant $C>0$ 
(depending on the specific generating sets of $G$ and $H$ with respect to which the densities are taken, as well as on the index $[G:H]$).

We claim that $B$ is $\CS$ in $G$.  Indeed, for every $n$ there is a finite set $F_n \subset H$ such that for any 
set $S \subset H$ of size at most $n$, there exists some $f \in F_n$ for which $S \subset f A$.
Now, let $T \subset G$ of size at most $n$.  We can decompose $T = \bigsqcup_{r \in R} S_r r$
for some sets $S_r \subset H$ for each $r \in R$.  Consider $S = \bigcup_{r\in R} S_r$.
Since 
$$ |S| \leq \sum_{r \in R} |S_r| = \sum_{r \in R} |S_r r | = |T| \leq n , $$
there exists $f \in F_n$ such that $S_r \subset S \subset f A$ for all $r \in R$.
Hence $S_r r \subset f Ar \subset fB$ for all $r \in R$ implying that $T \subset fB$.

We conclude that if $[G:H]<\infty$ and there are $\CS$ sets in $H$ of arbitrarily small density, then 
the same property holds in $G$ as well.

So we are left with analyzing the case where $G$ is nilpotent.

Since $G/[G,G]$ is an infinite finitely generated abelian group, there exists a surjective homomorphism $\vphi: G \to \Z$.

Let $\eps>0$ and using Theorem \ref{thm:small CS in Z}
take a $\CS$ set $A \subset \Z$ of density less than $\eps$.
By Lemma \ref{lem:CS from quotient}, $\vphi^{-1}(A)$ is a $\CS$ set in $G$.
Since $G$ is nilpotent, it has polynomial growth (see \cite{Bass, Guivarch, Wolf}, 
and for more details \cite[Chapter 8.2]{HFbook}).
By Lemma \ref{lem:density}, $\vphi^{-1}(A)$ has density less than $C \eps$
for some constant $C>0$.
\end{proof}

\begin{remark}
In general, there are several ways to define density. Given an increasing sequence $\mathcal{F} = (F_n)_n$ of finite subsets 
with $\bigcup_{n=1}^\infty F_n = G$, the \define{upper density} and \define{lower density} relative to $\mathcal{F}$ of a set $A \subset G$ are defined by
\[
\limsup_{n \to \infty} \frac{|A \cap F_n|}{|F_n|} \quad \text{and} \quad \liminf_{n \to \infty} \frac{|A \cap F_n|}{|F_n|},
\]
respectively. In this paper, we take $\mathcal{F}$ to be the sequence of balls of increasing radii. Since our goal is to demonstrate the existence of $\CS$ sets with arbitrarily small positive density, we focus on the upper density; any such example automatically has equally small or smaller lower density.

While using balls is a natural choice, in the context of amenable groups it is common to define density relative to a F{\o}lner sequence. Note, however, that if $G$ is virtually nilpotent, then it has polynomial growth, and so the sequence of balls forms a F{\o}lner sequence. On the other hand, if $G$ is not virtually nilpotent, then the existence of infinitely many disjoint $\CS$ sets implies that any subadditive function from the power set of $G$ into the unit interval $[0,1]$ (and, in particular, an upper density relative to {\em any} sequence) attains arbitrarily small values on some $\CS$ set. In particular, when $G$ is amenable, our result can be reformulated in terms of upper (hence lower) densities relative to {\em some} F{\o}lner sequence (but if it is not virtually nilpotent, we do not know whether this applies to {\em all} F{\o}lner sequences).
\end{remark}


\begin{thebibliography}{10}


\bibitem{BalcarFranek97}
B.~Balcar and F.~Franek.
\newblock Structural properties of universal minimal dynamical systems for
  discrete semigroups.
\newblock {\em Trans. Amer. Math. Soc.}, 349(5):1697--1724, 1997.

\bibitem{Bass}
H.~Bass.
\newblock The degree of polynomial growth of finitely generated nilpotent
  groups.
\newblock {\em Proceedings of the London Mathematical Society}, 3(4):603--614,
  1972.

\bibitem{BellaMalykhin99}
A.~Bella and V.~I. Malykhin.
\newblock On certain subsets of a group.
\newblock {\em Questions Answers Gen. Topology}, 17(2):183--197, 1999.

\bibitem{erdos1936some}
P.~Erd{\fH{o}}s and P.~Tur{\'a}n.
\newblock On some sequences of integers.
\newblock {\em Journal of the London Mathematical Society}, 1(4):261--264,
  1936.

\bibitem{FriTamVa18}
J.~Frisch, O.~Tamuz, and P.~Vahidi~Ferdowsi.
\newblock Strong amenability and the infinite conjugacy class property.
\newblock {\em Invent. Math.}, 218(3):833--851, 2019.

\bibitem{Fur81}
H.~Furstenberg.
\newblock {\em Recurrence in ergodic theory and combinatorial number theory}.
\newblock Princeton University Press, Princeton, NJ, 1981.
\newblock M. B. Porter Lectures.

\bibitem{GotHel55}
W.~H. Gottschalk and G.~A. Hedlund.
\newblock {\em Topological dynamics}.
\newblock American Mathematical Society Colloquium Publications, Vol. 36.
  American Mathematical Society, Providence, RI, 1955.

\bibitem{Gromov}
M.~Gromov.
\newblock Groups of polynomial growth and expanding maps (with an appendix by
  {J}acques {T}its).
\newblock {\em Publications Math{\'e}matiques de l'IH{\'E}S}, 53:53--78, 1981.

\bibitem{Guivarch}
Y.~Guivarc'h.
\newblock Croissance polynomiale et p{\'e}riodes des fonctions harmoniques.
\newblock {\em Bulletin de la Soci{\'e}t{\'e} Math{\'e}matique de France},
  101:333--379, 1973.

\bibitem{HindmanStrauss}
N.~Hindman and D.~Strauss.
\newblock {\em Algebra in the {S}tone-\v{C}ech compactification}.
\newblock De Gruyter Textbook. Walter de Gruyter \& Co., Berlin, 2012.
\newblock Theory and applications, Second revised and extended edition [of
  MR1642231].

\bibitem{KenRaumSal22}
M.~Kennedy, S.~Raum, and G.~Salomon.
\newblock Amenability, proximality and higher-order syndeticity.
\newblock {\em Forum Math. Sigma}, 10:Paper No. e22, 28, 2022.

\bibitem{Patil19}
B.~R. Patil.
\newblock Geometric progressions in syndetic sets.
\newblock {\em Arch. Math. (Basel)}, 113(2):157--168, 2019.

\bibitem{Protasov03}
I.~V. Protasov.
\newblock Partitions of groups into large subsets.
\newblock {\em Mat. Zametki}, 73(3):471--473, 2003.

\bibitem{szemeredi1969sets}
E.~Szemer{\'e}di.
\newblock On sets of integers containing no four elements in arithmetic
  progression.
\newblock {\em Acta Mathematica Hungarica}, 20(1-2):89--104, 1969.

\bibitem{Wolf}
J.~A. Wolf.
\newblock Growth of finitely generated solvable groups and curvature of
  {R}iemannian manifolds.
\newblock {\em Journal of Differential Geometry}, 2(4):421--446, 1968.

\bibitem{HFbook}
A.~Yadin.
\newblock {\em Harmonic functions and random walks on groups}.
\newblock Cambridge University Press, 2024.

\end{thebibliography}
%

\def\polhk#1{\setbox0=\hbox{#1}{\ooalign{\hidewidth
  \lower1.5ex\hbox{`}\hidewidth\crcr\unhbox0}}}

\end{document}